\def\vv{{\underline{\nu}}}
\def\1{{\underline{1}}}
\def\E{\mathcal{E}}
\def\card{\rm{card}}
\newtheorem{theo}{Theorem}[section]
\newtheorem{prop}{Proposition}[section]
\newtheorem{lem}{Lemma}[section]
\theoremstyle{definition}
\newtheorem{de}{Definition}[section]
\title[Poincar\'e series of  multiplier ideals]{\rm{The
Poincar\'e series of multiplier ideals of a simple complete ideal in
a local ring of a smooth surface}}
\author{C.~Galindo \and F.~Monserrat}
\curraddr{Departament de Matem\`{a}tiques, Universitat Jaume I,
Campus de Riu Sec. s/n, 12071 Castell\'{o} (Spain)} \email{
galindo@mat.uji.es} \curraddr{Instituto Universitario de
Matem\'atica Pura y Aplicada, Universidad Polit\'ecnica de Valencia,
Camino de Vera s/n, 46022 Valencia (Spain)}
\email{framonde@mat.upv.es}
\date{}
\thanks{Supported by Spain Ministry of Education
 MTM2007-64704, JCyL VA025A07 and Bancaixa P1-1A2005-08}
\subjclass[2000]{Primary 14B05; Secondary 13H05}
 \keywords{Multiplier
ideal, simple complete ideal, Poincar\'e series}
\begin{document}
\maketitle

\begin{abstract}
For a simple complete ideal $\wp$ of a local ring at a closed point
on a smooth complex algebraic surface, we introduce an algebraic
object, named Poincar\'e series $P_{\wp}$, that gathers in an
unified way the jumping numbers and the dimensions of the vector
space quotients given by consecutive multiplier ideals attached to
$\wp$. This paper is devoted to prove that $P_{\wp}$ is a rational
function giving an explicit expression for it.
\end{abstract}

\section{Introduction}
Multiplier ideals are a recent and important tool in singularity
theory and in birational geometry. They have the virtue of giving
information on the type of singularity corresponding to an ideal,
divisor or metric and of accomplishing several vanishing theorems
which made them very useful. As a reference, including historic
development, for this concept we refer to \cite[Ch. 9, 10, 11]{la}.
In spite of the utility of multiplier ideals, which is due to that
many of their properties and applications are known, to compute
these ideals is very hard because it involves facts as either to
calculate resolution of singularities or to obtain very difficult
integrals. As a consequence, very few explicit computations are
known. The most remarkable is the one of multiplier ideals of
arbitrary monomial ideals \cite{ho}.

Intimately related to multiplier ideals are the  jumping numbers
(see \cite{e-l-s-v}, where one can also read about the antecedents
of these numbers). Jumping numbers are a sequence of rational
numbers that provide a sequence of invariants for the singularity in
question, extending in a natural way the information given by the
log-canonical threshold since this is the smallest jumping number.

In the line of looking for explicit computations related to
multiplier ideals, we shall consider the local ring $R$ at a closed
point on a smooth complex algebraic surface. Our aim consists of
studying the sequence of multiplier ideals of a simple complete
ideal $\wp$ of $R$. It is well known that the class of simple
complete ideals plays a crucial role in the so-called Zariski theory
of complete ideals \cite{z-1,zar}. This theory was inspired by the
work of Enriques and Chisini \cite[L. IV, Ch. II, Sect. 17]{e-f} and
it has had further developments due mainly to  Lipman (see
\cite{li1}) who also gave a concept preceding the one of multiplier
ideal, the so-called adjoint ideal \cite{li2}.

Very recently, J\"{a}rvilehto  \cite{ja} obtained  an explicit
description of the jumping numbers attached to simple complete
ideals $\wp$ as above. He gives a formula where the set of jumping
numbers $ \mathcal{H}$ can be seen as a union of finitely many sets
$\mathcal{H}=\bigcup_{i=1}^{g^*+1} \mathcal{H}_i$ and each
$\mathcal{H}_i$ is determined by the maximal contact values of the
divisorial valuation defined by $\wp$ \cite{zar}. Furthermore the
jumping numbers of an ideal in the local ring at a rational
singularity on a complex algebraic surface can also be obtained by
an algorithm provided by Tucker in \cite{tuc}.

In this paper, we consider the family (ordered by inclusion) of
multiplier ideals defined by $\wp$ and taking into account that the
vector space given by the quotient between two consecutive
multiplier ideals is finitely generated (a consequence of Nakayama
Lemma), we attach to $\wp$ a Poincar\'e series whose coefficients
are the dimensions of the above vector spaces (see Definition
\ref{poi}). With the help of the explicit description  of the
jumping numbers in \cite{ja}, we give in Theorem \ref{DOS} a
characterization of the jumping numbers belonging to each set
$\mathcal{H}_i $, that depends on the fact that certain irreducible
exceptional divisors of a log-resolution of $\wp$ {\it contribute}
these jumping numbers. This concept was introduced in \cite{s-t} by
Smith and Thompson, where the set of irreducible exceptional
divisors which contribute jumping numbers associated with a singular
curve on a smooth surface is described. A similar result was
obtained by Favre and Jonsson using different techniques (see
Proposition 2.4, Lemma 2.11 and Fact 2 in the proof of Theorem 6.1
of \cite{fav}). These contributing exceptional divisors are
essential for our development and allow to prove our main result
(Theorem \ref{UNO}) which states that the mentioned Poincar\'e
series is a rational function and provides an explicit computation
for it. This series is an algebraic object that involves jumping
numbers and the dimensions of its above mentioned corresponding
vector spaces. The explicit description we give allows to get
information  that multiplier ideals add to the jumping numbers. In
fact, we prove that the coefficient corresponding to each jumping
number $\iota$ is the sum of the dimensions of certain vector spaces
attached to the indices $i$ such that $\iota \in \mathcal{H}_i$.
These dimensions are always one except for the last index $g^*+1$,
in which case they can be calculated from the expression of $\iota$
described in \cite{ja}. An important aid to compute our Poincar\'e
series is the description we show in Theorem \ref{TRES} of the
previous multiplier ideal to a given one.

To make easier the reading of this paper, in the next section we
state the necessary notations and our main results while the proofs
are relegated to the last section.

\section{Results}\label{results}

We fix, along the paper, a local ring $R$ at a closed point of a
smooth complex algebraic surface. Denote by $K$ the quotient field
of $R$. Consider a simple complete ideal $\wp$ of $R$ and set $\nu$
its corresponding valuation (of $K$ centered at $R$). $\nu$ is
defined by a divisor $E_n$ obtained from a finite simple sequence of
point blowing ups
\begin{equation}
\label{spip} \pi:   X = X_{n} \stackrel{\pi_{n}}{\longrightarrow}
X_{n-1} \longrightarrow \cdots \longrightarrow X_{1}
\stackrel{\pi_{1}}{\longrightarrow} X_{0}= {\rm Spec}(R),
\end{equation}
determined by the centers of $\nu$ at the spaces $X_j$ \cite{zar}.
We shall denote by $E_j$ the prime exceptional divisor created by
$\pi_j$ (and, abusing of notation, also its strict transform on
$X$). Associated with the above objects, there exists a rooted tree,
$\Gamma$, usually named the {\it dual graph} of $\nu$ (or of $\wp$),
where each vertex represents an exceptional divisor $E_j$ (on $X$)
and two vertices are joined by an edge whenever the corresponding
divisors intersect (see Figure
 \ref{fig0}); its root is the vertex corresponding to the first exceptional divisor, $E_1$.
 The {\it star vertices} of the dual graph (labelled with $st_i$ in Figure
\ref{fig0}) will be those whose associated exceptional divisors
$E_{st_i}$ meet three distinct prime exceptional divisors. From now
on, we shall denote by $g^*$ the number of star vertices. A vertex
of $\Gamma$ will be called a {\it dead vertex} if it has only one
adjacent vertex.

 \begin{figure}[h]
$$
\unitlength=1.00mm
\begin{picture}(80.00,30.00)(-10,3)
\thicklines \put(-5,30){\line(1,0){41}} \put(44,30){\line(1,0){31}}
\put(38,30){\circle*{0.5}} \put(40,30){\circle*{0.5}}
\put(42,30){\circle*{0.5}} \put(30,10){\line(0,1){20}}
\put(50,20){\line(0,1){10}} \put(60,0){\line(0,1){30}}
\put(10,15){\line(0,1){15}} \thinlines \put(20,30){\circle*{1}}
\put(30,30){\circle*{1}} \put(50,30){\circle*{1}}
\put(60,30){\circle*{1}} \put(65,30){\circle*{1}}
\put(70,30){\circle*{1}} \put(75,30){\circle*{1}}
\put(75,30){\circle{1}}

\put(30,20){\circle*{1}} \put(60,20){\circle*{1}}
\put(60,10){\circle*{1}} \put(10,30){\circle*{1}}
\put(30,10){\circle*{1}} \put(50,20){\circle*{1}}
\put(60,0){\circle*{1}} \put(-5,30){\circle*{1}}
\put(0,30){\circle*{1}} \put(5,30){\circle*{1}}
\put(15,30){\circle*{1}} \put(25,30){\circle*{1}}
\put(35,30){\circle*{1}} \put(45,30){\circle*{1}}
\put(55,30){\circle*{1}} \put(10,25){\circle*{1}}
\put(10,20){\circle*{1}} \put(10,15){\circle*{1}}
\put(30,25){\circle*{1}} \put(30,15){\circle*{1}}
\put(35,30){\circle*{1}}
%\put(-9,25){{\bf 1}=$\rho_0$}
%\put(11.5,14){$\rho_1$}
\put(4.5,19){$\Gamma_1$}
%\put(31.5,9){$\rho_2$}
\put(24.5,14){$\Gamma_2$}
%\put(61.5,-1){$\rho_g$}
\put(54.5,4){$\Gamma_{g}$} \put(9,32){$st_1$} \put(29,32){$st_2$}
\put(57.5,32){$st_{g}$}
%\put(72,32){$\alpha(\nu)$}
\put(70.5,25){$\Gamma_{g+1}$}
\end{picture}
$$
\caption{The dual graph of a divisorial valuation.} \label{fig0}
\end{figure}
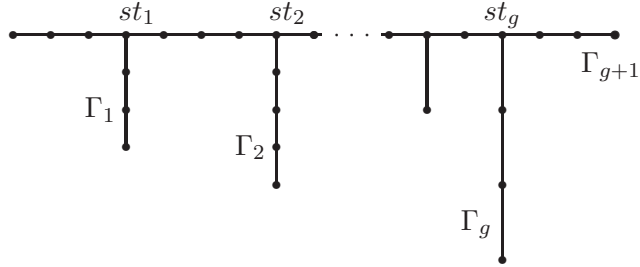

An exceptional divisor $E_{j_0}$ {\it precedes} another one
$E_{j_1}$ if $j_0<j_1$. Also, $E_{j_1}$ is named {\it proximate} to
$E_{j_0}$ whenever $E_{j_0}$ precedes $E_{j_1}$ and the point to be
blown-up in $X_{j_{1}-1}$  to create $E_{j_1}$ is in the strict
transform of $E_{j_0}$. If $E_{j_1}$ is proximate to, at most, one
prime exceptional divisor, then we shall say that $E_{j_1}$ is a {\it
free} divisor; otherwise $E_{j_1}$ will be a {\it satellite}
divisor. Notice that the divisors corresponding to star vertices,
$E_{st_i}$, are characterized by the fact that $E_{st_i}$ is
satellite and $E_{st_i+1}$ is free.

We associate to each star vertex $st_i$, inductively, a rooted
subtree $\Gamma_i$ of $\Gamma$ in the following manner: $\Gamma_1$
is the subgraph of $\Gamma$ whose vertices are those corresponding
to the divisors $E_j$ such that $j\leq st_1$ and, for $1<i\leq g^*$,
$\Gamma_i$ is the subgraph of $\Gamma$ whose vertices correspond to
divisors $E_j$ such that $j\leq st_i$ but they are not vertices of
$\Gamma_{k}$, $1\leq k\leq i-1$; the root of $\Gamma_1$ is the one
of $\Gamma$ and, for each $i>1$, the root of $\Gamma_i$ is the
vertex adjacent to $st_{i-1}$. Also we define $\Gamma_{g^*+1}$ to be
the rooted subtree of $\Gamma$ whose vertices are those which are
not in $\Gamma_{k}$, $1\leq k\leq g^*$; its root is the vertex
adjacent to $st_{g^*}$.

Along this paper we stand $\{\bar{\beta}_i\}_{i=0}^{g+1}$ for the {\it
maximal contact values} (or {\it Zariski exponents}) of the
valuation $\nu$ \cite[Sect. 6]{spi}. Also, set $e_i : = \gcd
(\bar{\beta}_0, \bar{\beta}_1, \ldots, \bar{\beta}_i)$, $0 \leq i
\leq g$ and $n_i := e_{i-1}/e_i$, for $1 \leq i \leq g$. If the last
prime exceptional divisor $E_n$ is free (as in Figure \ref{fig0})
then $g=g^*$ and, otherwise, $g=g^*+1$ (in this case there is no
subgraph $\Gamma_{g+1}$). Also we shall denote by $F_i$ $(1 \leq i
\leq g^*)$ the divisor $E_{st_i}$ corresponding to a star vertex of
$\Gamma$ and we stand $F_{g^* +1}$ for the last obtained exceptional
divisor $E_n$.

A concept that we shall use often in this paper is given in the
following

\begin{de}\label{general}
Given a prime exceptional divisor $E_j$, an {\it $E_j$-general
element} for the valuation $\nu$ will be an element $\varphi\in R$
giving an equation of an analytically irreducible germ of curve
whose strict transform on $X_j$ is smooth and intersects $E_j$
transversally at a non-singular point of the exceptional locus. The
$E_n$-general elements are usually named {\it general elements of the
valuation} $\nu$.
\end{de}

A remarkable fact is the description of the maximal contact values
of $\nu$ as values of certain $E_j$-general elements.
Specifically, if $\varphi_j \in R$ denotes any $E_j$-general element
for $\nu$, then $\bar{\beta}_i=\nu(\varphi_{l_i})$, $0\leq i\leq
g+1$, where $l_0<l_1<\ldots <l_{g}$ are the subindexes of the
divisors $E_{l_i}$ corresponding to the first $g+1$ dead vertices of
$\Gamma$ and $l_{g+1}=n$.

Our goal in this paper is to define and compute an object containing
information concerning the multiplier ideals attached to the ideal
$\wp$. To do it, since the sequence $\pi$ in (\ref{spip}) gives a
log-resolution of the ideal $\wp$, we can consider the effective
divisor $D = \sum_{j=1}^n a_j E_j$ such that $\wp \mathcal{O}_{X} =
\mathcal{O}_{X}(-D)$. Notice that if $\varphi_j \in R$ is an
$E_j$-general element for $\nu$ then $a_j = \nu (\varphi_j)$. Thus,
for any positive rational number $\iota$, the {\it multiplier ideal}
of $\wp$ and $\iota$ can be defined as $\mathcal{J} (\wp^\iota) :=
\pi_* \mathcal{O}_X ( K_{X|X_0} - \lfloor \iota D \rfloor )$, where
$K_{X|X_0}$ is the relative canonical divisor and $\lfloor \cdot
\rfloor$ represents the round-down or the integral part of the
corresponding divisor. The family of multiplier ideals is totally
ordered by inclusion, parameterized by non-negative rational
numbers. Furthermore, there is an increasing sequence $\iota_0 <
\iota_1 < \cdots$ of positive rational numbers, called {\it jumping
numbers}, such that $\mathcal{J} (\wp^\iota) = \mathcal{J}
(\wp^{\iota_l})$ for $\iota_l \leq \iota < \iota_{l+1}$ and
$\mathcal{J} (\wp^{\iota_{l+1}}) \varsubsetneq \mathcal{J}
(\wp^{\iota_l})$ for each $l \geq 0$; $\iota_0$, usually named
log-canonical threshold of $\wp$, is the least positive rational
number with such a property.

Denote, as above, by $g^*$ the number of star vertices in $\Gamma$
and \label{FOUR} set
\[
\mathcal{H}_i : = \left \{ \iota(i,p,q,r):= \frac{p}{e_{i-1}}+
\frac{q}{\bar{\beta}_i}+ \frac{r}{e_i} \; \mid \;
\frac{p}{e_{i-1}}+ \frac{q}{\bar{\beta}_i} \leq \frac{1}{e_i}; p,
q \geq 1, r \geq 0 \right \}
\]
whenever $1 \leq i \leq g^*$, and
\[
\mathcal{H}_{g^* +1} : = \left \{ \iota(g^* +1 ,p,q):=
\frac{p}{e_{g^*}}+ \frac{q}{\bar{\beta}_{g^* +1}} \; \mid \;  p, q
\geq 1 \right \},
\]
$p$, $q$ and $r$ being integer numbers. In \cite{ja}, it is proved
that the set $\mathcal{H}$ of jumping numbers of the ideal $\wp$ can
be computed as $\mathcal{H}= \cup_{i=1}^{g^*+1} \mathcal{H}_i$.

Now, inspired by the terminology introduced in \cite{s-t}, for a
simple complete ideal $\wp$ of $R$ and its corresponding
log-resolution $\pi$ (see (\ref{spip})) and divisor $D =
\sum_{j=1}^n a_j E_j$, we give the following definition:

\begin{de}
\label{contr}{\rm  A {\it candidate jumping number} from  a prime exceptional divisor
$E_j$ given by $\pi$ is a positive rational number  $\iota$ such that $\iota a_j$ is an integer number.
Also, we shall say that $E_j$ {\it
contributes} $\iota$ whenever $\iota$ is a candidate jumping number from $E_j$
and $\mathcal{J} (\wp^{\iota}) \subsetneq \pi_* \mathcal{O}_X ( -
\lfloor \iota D \rfloor + K_{X|X_0} + E_j)$.  }
\end{de}

Assume $\iota \in \mathcal{H}$ and $\iota \neq \iota_0 = \min
\mathcal{H}$. We denote by $\iota^<$ the largest jumping number
which is less than $\iota$. By convention we set $\mathcal{J}
(\wp^{\iota_0^<}) =R$.
 Nakayama Lemma proves that, for any $\iota \in \mathcal{H}$, $\mathcal{J} (\wp^{\iota^<}) / \mathcal{J} (\wp^{\iota})
$ is a finitely generated $\mathbb{C}$-vector space, $\mathbb{C}$
being the field of complex numbers. Thus, we can define the object
to be studied as

\begin{de}
\label{poi} {\rm Let $\wp$ a simple complete ideal of $R$. The {\it
Poincar\'e series of multiplier ideals of $\wp$} is defined to be
the following fractional series:
\[
P_{\wp} (t) := \sum_{\iota \in \mathcal{H}} \dim_{\mathbb{C}} \left(
\frac{\mathcal{J} (\wp^{\iota^<})}{\mathcal{J} (\wp^{\iota})}
\right) t^\iota,
\] }
$t$ being an indeterminate.
\end{de}

Our main result is

\begin{theo}
\label{UNO} The Poincar\'e series $P_{\wp} (t)$ can be expressed as
\[
P_{\wp} (t)=\frac{1}{1-t} \sum_{i=1}^{g^*} \sum_{\iota \in
\mathcal{H}_i, \iota < 1} t^\iota + \left( \frac{1}{1-t} + \frac{t}{(1-t)^2} \right) \sum_{\iota \in \Omega} t^\iota,
\]
where $$\Omega:=\{\iota\in \mathcal{H}_{g^* +1}\mid  \iota\leq 2
\mbox{ and } \iota-1\not\in \mathcal{H}_{g^* +1} \}.$$
\end{theo}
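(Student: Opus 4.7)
The approach is to compute the coefficient of $t^\iota$ in $P_{\wp}(t)$ separately for each $\iota\in\mathcal{H}$, and then to sum the resulting contributions using the arithmetic structure of the decomposition $\mathcal{H}=\bigcup_{i=1}^{g^*+1}\mathcal{H}_i$. By Theorem \ref{DOS}, an exceptional divisor associated with the index $i$ contributes $\iota$ as a jumping number exactly when $\iota\in\mathcal{H}_i$; and using the explicit description of the previous multiplier ideal in Theorem \ref{TRES}, each such contribution adds $1$ to $\dim_{\mathbb{C}}\bigl(\mathcal{J}(\wp^{\iota^<})/\mathcal{J}(\wp^\iota)\bigr)$ when $1\le i\le g^*$, and adds $r(\iota):=\#\{(p,q)\in\mathbb{Z}_{\ge 1}^2:\iota=p/e_{g^*}+q/\bar\beta_{g^*+1}\}$ when $i=g^*+1$. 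Consequently
\[
P_{\wp}(t)\;=\;\sum_{i=1}^{g^*}\sum_{\iota\in\mathcal{H}_i}t^\iota\;+\;\sum_{\iota\in\mathcal{H}_{g^*+1}}r(\iota)\,t^\iota.
\]

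For each $1\le i\le g^*$, the plan is to exploit the identity $\iota(i,p,q,r+e_i)=\iota(i,p,q,r)+1$. Combined with the inequality $\iota(i,p,q,r)\le 1/e_i+(e_i-1)/e_i=1$ for $0\le r<e_i$ (which follows from the constraint $p/e_{i-1}+q/\bar\beta_i\le 1/e_i$ defining $\mathcal{H}_i$), this shows that every element of $\mathcal{H}_i$ is uniquely of the form $\iota'+k$ with $\iota'\in\mathcal{H}_i$, $\iota'<1$, and $k\in\mathbb{Z}_{\ge 0}$. Hence $\sum_{\iota\in\mathcal{H}_i}t^\iota=(1-t)^{-1}\sum_{\iota\in\mathcal{H}_i,\,\iota<1}t^\iota$, which yields the first summand of the target formula.

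For $i=g^*+1$ the key combinatorial claim is that each $\iota\in\mathcal{H}_{g^*+1}$ admits a unique decomposition $\iota=\iota_0+k$ with $\iota_0\in\Omega$ and $k\in\mathbb{Z}_{\ge 0}$, and moreover $r(\iota)=k+1$. Existence and uniqueness of the decomposition follow by observing that any $\iota>2$ in $\mathcal{H}_{g^*+1}$ forces $p>e_{g^*}$ or $q>\bar\beta_{g^*+1}$ in every representation, hence $\iota-1\in\mathcal{H}_{g^*+1}$; so iteratively subtracting $1$ lands inside $\Omega$. The multiplicity identity $r(\iota_0+k)=k+1$ rests on $\gcd(e_{g^*},\bar\beta_{g^*+1})=1$ from Zariski's theory of maximal contact values: each unit translation introduces exactly one additional representation. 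Splitting $k+1=1+k$ and applying $\sum_{k\ge 0}t^k=(1-t)^{-1}$ together with $\sum_{k\ge 0}kt^k=t(1-t)^{-2}$ produces the factor $(1-t)^{-1}+t(1-t)^{-2}$ multiplying $\sum_{\iota\in\Omega}t^\iota$.

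The main obstacle is justifying the multiplicity statements themselves: that a contribution is always one-dimensional for $i\le g^*$, and that $r(\iota_0+k)=k+1$ for $i=g^*+1$. Both require careful bookkeeping of the exceptional divisors meeting $E_{st_i}$ (respectively $E_n$) in the log-resolution, combined with the Zariski-theoretic arithmetic among the $\bar\beta_i$ and $e_i$; the case $i=g^*+1$ additionally demands a clean treatment of the boundary range $\iota\in\mathcal{H}_{g^*+1}\cap(1,2]$ to confirm that $\Omega$ is a fundamental domain for the $+1$ translation action. Once these local contributions and the translation structure are in place, assembling the three pieces produces the asserted rational expression.
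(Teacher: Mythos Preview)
Your combinatorics is correct: the periodicity $\iota(i,p,q,r+e_i)=\iota(i,p,q,r)+1$ for $i\le g^*$, the fundamental-domain role of $\Omega$ for the shift $\iota\mapsto\iota+1$ on $\mathcal H_{g^*+1}$, and the identity $r(\iota_0+k)=k+1$ via $\gcd(e_{g^*},\bar\beta_{g^*+1})=1$ are exactly the arithmetic ingredients used in the paper. The gap is the first displayed formula. Theorems~\ref{DOS} and~\ref{TRES}, together with the additivity Lemma~\ref{ideafix}, give only $P_\wp(t)=\sum_{i}\sum_{\iota\in\mathcal H_i}d^i_\iota\,t^\iota$ with $d^i_\iota:=\dim_{\mathbb C}\bigl(\pi_*\mathcal O_X(K_{X|X_0}-\lfloor\iota D\rfloor+F_i)/\mathcal J(\wp^\iota)\bigr)$. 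Nothing in those statements forces $d^i_\iota=1$ for $i\le g^*$ or $d^{g^*+1}_\iota=r(\iota)$; these equalities are the real content of the theorem, not a consequence of \ref{TRES}.

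The paper does \emph{not} obtain these values by intersection-number bookkeeping at the star vertices. For $\iota<1$ it identifies $\dim\bigl(\mathcal J(\wp^{\iota^<})/\mathcal J(\wp^\iota)\bigr)$ with the Hodge-spectrum coefficient $n_\iota(\varphi)$ (Budur) and invokes Saito's computation of $n_\iota$ to conclude that each $d^i_\iota=1$ (remark after Lemma~\ref{asterix}). For $i\le g^*$ it then proves $d^i_{\iota+1}=d^i_\iota$ (Lemma~\ref{abraracurcix}) by a commutative-diagram argument involving antinef closures and Statement~(1) of Theorem~\ref{anafel}. For $i=g^*+1$ it uses Statement~(2) of Theorem~\ref{anafel} to get the increment $d^{g^*+1}_{\iota+1}=d^{g^*+1}_\iota+1$, and finally gives a separate argument, based on the explicit monomial generators of \cite{ga}, showing $d^{g^*+1}_\iota=1$ for those $\iota\in\Omega$ with $1<\iota\le 2$ (the Hodge-spectrum step covers only $\iota<1$). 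Your last paragraph correctly flags this as ``the main obstacle'' but the resolution you sketch---arithmetic among the $\bar\beta_i,e_i$ and adjacency of exceptional divisors---is the machinery behind Theorem~\ref{DOS}, not what is needed here.
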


We must clarify that $P_{\wp} (t)$ is not an element in
$\mathbb{C}(t)$ but there exist finitely many (exactly $g^* +1$)
``roots of $t$" which could be considered as another indeterminates,
say $z_1, z_2, \ldots, z_{g^* +1}$, such that $P_{\wp} (t) \in
\mathbb{C}(z_1, z_2, \ldots, z_{g^* +1})$. To prove this theorem  we
shall use the following results:

\begin{theo}
\label{DOS} A jumping number $\iota$ of a simple complete ideal $\wp$ belongs to the set
$\mathcal{H}_i$ ($1 \leq i \leq g^*+1$) if and only if the prime
exceptional divisor $F_i := E_{st_i}$ contributes $\iota$.
\end{theo}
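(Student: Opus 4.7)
The plan is to convert ``contribution'' into a numerical intersection condition, and then match that condition term-by-term with the parametrisation $\mathcal{H}_i=\{\iota(i,p,q,r)\}$. For any prime exceptional divisor $E_j$ and any rational $\iota>0$, consider the short exact sequence
\[
0\to \mathcal{O}_X(K_{X|X_0}-\lfloor \iota D\rfloor)\to \mathcal{O}_X(K_{X|X_0}-\lfloor \iota D\rfloor+E_j)\to \mathcal{O}_{E_j}\bigl((K_{X|X_0}-\lfloor \iota D\rfloor+E_j)|_{E_j}\bigr)\to 0.
\]
Applying $\pi_*$ and invoking the Local Vanishing Theorem \cite[Thm.~9.4.1]{la} to kill the relevant $R^1\pi_*$, one obtains a short exact sequence whose cokernel is $H^0(E_j,\mathcal{L})$, where $\mathcal{L}$ is the restricted line bundle on $E_j\cong \mathbb{P}^1$. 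Thus $E_j$ contributes $\iota$ iff $\iota a_j\in\mathbb{Z}$ and $\deg\mathcal{L}\geq 0$; an adjunction computation using $K_{X|X_0}\cdot E_j=-2-E_j^2$ gives $\deg\mathcal{L}=-2-\lfloor\iota D\rfloor\cdot E_j$, so the criterion becomes
\[
\iota a_j\in\mathbb{Z}\qquad\text{and}\qquad \lfloor\iota D\rfloor\cdot E_j\leq -2.
\]

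I then specialise $j=st_i$. Writing $\lfloor \iota D\rfloor=\iota D-\sum_k\{\iota a_k\}E_k$ and using that antinefness plus simplicity of $\wp$ force $D\cdot E_{st_i}=0$ for $1\leq i\leq g^*$ (respectively $D\cdot E_n=-1$ if $i=g^*+1$), the inequality $\lfloor\iota D\rfloor\cdot E_{st_i}\leq -2$ together with the candidate condition $\{\iota a_{st_i}\}=0$ reduces to
\[
\sum_{E_k\text{ adjacent to }E_{st_i}}\{\iota a_k\}\;\geq\; 2.
\]
The star vertex $st_i$ has exactly three neighbours in $\Gamma$: the vertex $\sigma$ on the path back to the root, the dead vertex $l_i$ on the corresponding arm, and the first vertex $\tau$ on the arm towards $\Gamma_{i+1}$; an analogous reduction handles $i=g^*+1$.

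For the implication $\iota\in\mathcal{H}_i\Rightarrow F_i$ contributes $\iota$, write $\iota=p/e_{i-1}+q/\bar\beta_i+r/e_i$ with the prescribed constraints. The standard Zariski/Enriques formulas (see \cite{spi}) express $a_{st_i}$, $a_\sigma$, $a_{l_i}$ and $a_\tau$ explicitly in terms of $\bar\beta_i$, $e_{i-1}$, $e_i$ and $n_i=e_{i-1}/e_i$; substitution shows $\iota a_{st_i}\in\mathbb{Z}$ holds automatically, and a direct calculation of the three fractional parts $\{\iota a_\sigma\}$, $\{\iota a_{l_i}\}$, $\{\iota a_\tau\}$ in closed form shows that their sum is $\geq 2$ precisely when J\"arvilehto's inequality $p/e_{i-1}+q/\bar\beta_i\leq 1/e_i$ holds. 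For the converse, if $F_i$ contributes $\iota$, then from $\iota a_{st_i}\in\mathbb{Z}$ and the divisibility structure $e_i\mid e_{i-1}$, $e_i\mid\bar\beta_i$, $\gcd(e_{i-1}/e_i,\bar\beta_i/e_i)=1$ one obtains a unique decomposition $\iota=p/e_{i-1}+q/\bar\beta_i+r/e_i$ with $p,q\geq 1$ and $r\geq 0$; the intersection inequality then forces $p/e_{i-1}+q/\bar\beta_i\leq 1/e_i$, placing $\iota\in \mathcal{H}_i$.

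The main obstacle is the explicit combinatorial step: tracking the coefficients $a_\sigma$, $a_{l_i}$, $a_\tau$ at the three neighbours of $E_{st_i}$ through the recursive blow-up structure and evaluating the fractional parts in closed form. The key ingredients are the proximity relations (which yield identities such as $n_i a_\sigma + a_{l_i}=a_{st_i}$) and the standard expressions of the maximal contact values as combinations of $e_{i-1}$ and $\bar\beta_i$. Once these identities are assembled, the arithmetic equivalence with the inequality defining $\mathcal{H}_i$ is a short calculation.
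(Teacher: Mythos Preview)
Your reduction of ``$F_i$ contributes $\iota$'' to the numerical condition $\iota a_{st_i}\in\mathbb{Z}$ together with $-\lfloor\iota D\rfloor\cdot F_i\ge 2$ is correct and is exactly the content of the paper's Proposition~\ref{CINCO}; the further rewriting as $\sum_{E_k\text{ adj.\ }F_i}\{\iota a_k\}\ge 2$ is also valid (one small slip: the neighbour of $F_i$ on the arm towards the dead end is in general the predecessor $F'_{i-1}=E_{st_i-1}$, not the dead vertex $E_{l_i}$ itself). For the implication $\iota\in\mathcal{H}_i\Rightarrow F_i$ contributes $\iota$, your strategy coincides with the paper's, but what you call ``a direct calculation of the three fractional parts in closed form'' is precisely the substance of the argument. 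The paper needs the explicit values $\nu(\varphi_{-2})=e_{i-1}\bar\beta_i^{\varphi_{-2}}$, $\nu(\varphi_{-1})=e_{i-1}^{\varphi_{-1}}\bar\beta_i$, $\nu(\varphi_{1})$, the continued-fraction identity $n_i^{\varphi_{-2}}\bar\beta_i-n_i\bar\beta_i^{\varphi_{-2}}=1$ (Lemma~\ref{SEIS}), a case split according to whether $F'_{i+1}$ is free or satellite (with a reduction via an auxiliary valuation $\underline{\nu}$ in the latter case), and separate treatments for $i=g^*+1$ and for $g^*=g-1$. None of this follows from proximity identities alone, and your proposal does not address any of it.

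For the converse you take a genuinely different route from the paper, and here there is a real gap. The paper does \emph{not} argue arithmetically: for $0<\iota<1$ it identifies $\dim_{\mathbb{C}} \mathcal{J}(\wp^{\iota^<})/\mathcal{J}(\wp^{\iota})$ with a coefficient of the Hodge spectrum (via \cite{bu} and \cite{sai}), combines this with the dimension-additivity formula of Lemma~\ref{ideafix} to force $i\in\{k:\iota\in\mathcal{H}_k\}$, and then reduces the case $\iota>1$ to $0<\iota<1$ by subtracting $\lfloor\iota\rfloor$. Your proposed alternative---``from $\iota a_{st_i}\in\mathbb{Z}$ one obtains a unique decomposition with $p,q\ge 1$, $r\ge 0$, and then the intersection inequality forces J\"arvilehto's bound''---does not stand as written: the candidate condition alone does not yield such a decomposition (for instance $\iota=1/e_i$ is a candidate from $F_i$, yet for $n_i=2$, $\bar\beta_i/e_i=3$ there is no representation $1/e_i=p/e_{i-1}+q/\bar\beta_i+r/e_i$ with $p,q\ge 1$ and $r\ge 0$), so one cannot first decompose and then test the inequality. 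A purely arithmetic converse would have to show directly that for every candidate $\iota\notin\mathcal{H}_i$ the sum of the three fractional parts is at most~$1$; this is not a by-product of the forward computation and is the step you have not supplied.
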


\begin{theo}
\label{TRES} Let $\iota$  be a jumping number of a  simple complete
ideal $\wp$. Then
\[
\pi_*\mathcal{O}_X \left( - \lfloor \iota D \rfloor + K_{X|X_0} +
\sum_{l=1}^s F_{i_l} \right) = \mathcal{J} \left(\wp^{\iota^<}
\right),
\]
where $\{i_1,i_2,\ldots,i_s\}$ is the set of indexes $i$, $1 \leq i \leq g^* +1$, such that
$\iota\in \mathcal{H}_{i}$.
\end{theo}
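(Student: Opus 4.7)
The plan has two phases: an explicit identification of $\mathcal{J}(\wp^{\iota^<})$ as a pushforward involving \emph{all} candidate divisors in $J:=\{j:\iota a_j\in\mathbb{Z}\}$, followed by a reduction that trims $J$ down to the star indices $\{st_{i_1},\ldots,st_{i_s}\}$.

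For the first phase, I observe that for $0<\epsilon\ll 1$ the floor satisfies $\lfloor(\iota-\epsilon)a_j\rfloor=\lfloor\iota a_j\rfloor-1$ precisely when $\iota a_j\in\mathbb{Z}$, and equals $\lfloor\iota a_j\rfloor$ otherwise; hence
\[
\mathcal{J}(\wp^{\iota^<})=\pi_*\mathcal{O}_X\Bigl(K_{X|X_0}-\lfloor\iota D\rfloor+\sum_{j\in J}E_j\Bigr).
\]
Theorem \ref{DOS} gives $\{st_{i_1},\ldots,st_{i_s}\}\subseteq J$; since $\sum_l F_{i_l}\le\sum_{j\in J}E_j$ as divisors, the inclusion $\pi_*\mathcal{O}_X(K_{X|X_0}-\lfloor\iota D\rfloor+\sum_l F_{i_l})\subseteq\mathcal{J}(\wp^{\iota^<})$ is immediate.

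For the reverse inclusion I would peel off the remaining indices $k\in J\setminus\{st_{i_1},\ldots,st_{i_s}\}$ one at a time. Given a set $S$ with $\{F_{i_l}\}\subseteq S\subseteq J$ and $E_k\in S$, set $\Delta:=K_{X|X_0}-\lfloor\iota D\rfloor+\sum_{E_l\in S\setminus\{E_k\}}E_l$. The short exact sequence
\[
0\to\mathcal{O}_X(\Delta)\to\mathcal{O}_X(\Delta+E_k)\to\mathcal{O}_{E_k}\bigl((\Delta+E_k)|_{E_k}\bigr)\to 0,
\]
combined with the local vanishing $R^1\pi_*\mathcal{O}_X(\Delta)=0$ (of Grauert--Riemenschneider / Esnault--Viehweg type, applicable since the exceptional locus is SNC), reduces the desired equality $\pi_*\mathcal{O}_X(\Delta+E_k)=\pi_*\mathcal{O}_X(\Delta)$ to the numerical condition $(\Delta+E_k)\cdot E_k<0$ on $E_k\simeq\mathbb{P}^1$. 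By adjunction, this degree equals $-2-\lfloor\iota D\rfloor\cdot E_k+\#\{E_l\in S\setminus\{E_k\}:E_l\sim E_k\}$.

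The principal obstacle is the combinatorial verification that this degree is strictly negative for a well chosen elimination order. My plan is to process vertices within each subtree $\Gamma_i$ from the outside in---starting at dead or tail vertices and moving toward the star $F_i$---so that the number of $S$-neighbors of $E_k$ at removal time is tightly controlled. Combining this with the explicit recursive description of $a_j$ in terms of $\bar\beta_i,n_i,e_i$ and the contrapositive of Theorem \ref{DOS} (i.e.\ that $E_k$ is not a contributing divisor) should force $-\lfloor\iota D\rfloor\cdot E_k$ to be large enough to compensate for the neighbor count, yielding the strict inequality. Iterating the reduction then contracts the sum over $J$ to the sum over $\{F_{i_1},\ldots,F_{i_s}\}$ and gives Theorem \ref{TRES}.
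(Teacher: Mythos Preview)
Your plan is essentially the paper's own: write $\mathcal{J}(\wp^{\iota^<})$ as the pushforward with $\sum_{j\in J}E_j$ added, then peel off the non-contributing $E_k$'s one at a time via a discharge argument based on intersection numbers, working through each $\Gamma_i$ from its dead ends inward toward $F_i$. Two points need sharpening.

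First, the appeal to $R^1\pi_*$ vanishing is unnecessary. Once $(\Delta+E_k)\cdot E_k<0$, the restriction $\mathcal{O}_{E_k}\bigl((\Delta+E_k)|_{E_k}\bigr)$ already has no global sections (as $E_k\cong\mathbb{P}^1$), so applying $\pi_*$ to the short exact sequence gives $\pi_*\mathcal{O}_X(\Delta)=\pi_*\mathcal{O}_X(\Delta+E_k)$ directly. The paper records this as the elementary discharge Lemma~\ref{descarga}.

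Second, and more substantively, your numerical check does not close as written. From non-contribution (Proposition~\ref{CINCO}(b), which is what you are really invoking rather than Theorem~\ref{DOS}) you obtain only $-\lfloor\iota D\rfloor\cdot E_k\le 1$; with one remaining $S$-neighbor your degree formula then gives $\le 0$, not $<0$. (Incidentally, your phrase ``force $-\lfloor\iota D\rfloor\cdot E_k$ to be large enough'' has the sign backwards: you need it to be \emph{small}.) The paper closes this one-neighbor case with an ``all-or-none'' observation, Lemma~\ref{a}: for a non-star $E_k\in\Delta$, either every neighbor of $E_k$ lies in $\Delta$ or none does (a consequence of $D\cdot E_k=0$). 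Since $S\subseteq\Delta$, one $S$-neighbor forces all neighbors into $\Delta$, whence $\lfloor\iota D\rfloor\cdot E_k=\iota D\cdot E_k=0$ and the degree is exactly $-1$. A companion statement at the stars (Lemma~\ref{c}) and a short case analysis are then needed to remove a non-contributing $F_i$ itself, depending on whether its forward neighbor lies in $\Delta$; your sketch does not yet isolate these steps.
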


As a direct consequence of Theorem \ref{DOS} and Clause (c) of the
forthcoming Proposition \ref{CINCO}, we get the following result
that tells us which are the prime exceptional divisors contributing
a given jumping number.

\begin{prop}\label{iuiu}
The prime exceptional divisors that contribute a jumping number
$\iota$ of a simple complete ideal $\wp$ are those divisors $F_i$
such that $\iota\in \mathcal{H}_i$.
\end{prop}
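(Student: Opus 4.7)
The plan is to split the claimed equivalence into its two directions and combine the two pieces of machinery explicitly cited: Theorem~\ref{DOS} and Clause (c) of Proposition~\ref{CINCO}. Nothing new has to be produced, and the whole argument should fit in a few lines once both are available.

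For the direction that every $F_i$ with $\iota \in \mathcal{H}_i$ contributes $\iota$, I would simply invoke Theorem~\ref{DOS}. That theorem characterizes, for each $i \in \{1, 2, \ldots, g^*+1\}$, membership of a jumping number in $\mathcal{H}_i$ as equivalent to the star-vertex divisor $F_i = E_{st_i}$ (with the convention $F_{g^*+1} = E_n$) contributing $\iota$. Thus every $F_i$ with $\iota \in \mathcal{H}_i$ already appears among the contributing exceptional divisors.

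For the converse, I must rule out any other prime exceptional divisor $E_j$ contributing $\iota$. This is exactly the role of Clause (c) of Proposition~\ref{CINCO}: it asserts that if a prime exceptional divisor of the log-resolution $\pi$ contributes some jumping number of $\wp$, then necessarily $E_j = F_i$ for some $i \in \{1,\ldots,g^*+1\}$, i.e.\ $E_j$ is either a star-vertex divisor $E_{st_i}$ or the last divisor $E_n$. Applying Clause (c) to a hypothetical contributor $E_j$ gives $E_j = F_i$ for some $i$, and then Theorem~\ref{DOS} forces $\iota \in \mathcal{H}_i$. Since $\iota \in \mathcal{H} = \cup_{i=1}^{g^*+1} \mathcal{H}_i$, the index set $\{i : \iota \in \mathcal{H}_i\}$ is non-empty, so the description is consistent.

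The only real obstacle here lies upstream, in Clause (c) of Proposition~\ref{CINCO}: excluding contributions from free and satellite divisors that are not star vertices requires examining the arithmetic of the divisor $D = \sum a_j E_j$, the relative canonical divisor $K_{X|X_0}$, and the intersection behavior on the dual graph $\Gamma$. However, that analysis is carried out elsewhere in the paper and we may use its conclusion as a black box. Granted both ingredients, Proposition~\ref{iuiu} is immediate from the one-line combination described above, with no further case analysis required.
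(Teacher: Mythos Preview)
Your proposal is correct and matches the paper's own justification exactly: the paper states that Proposition~\ref{iuiu} is a direct consequence of Theorem~\ref{DOS} and Clause~(c) of Proposition~\ref{CINCO}, and your argument simply unpacks that one-line remark into its two implications.
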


\noindent {\it Remark}. In \cite{tuc} it is announced that, in a
future work of the author, a similar result to Proposition
\ref{iuiu} for jumping numbers that are less than one will be
provided.

\section{Proofs}
Along this section we shall use the above notations. We start by proving Theorem \ref{DOS}.
\subsection{Proof of Theorem \ref{DOS}}
It will be useful the following result, whose proof can be deduced from Section 3 of \cite{s-t} and,
therefore, we  omit it.
\begin{prop}
\label{CINCO} Let $\iota$ be a positive rational number and $E_j$
a prime exceptional divisor given by the sequence $\pi$ of
(\ref{spip}). Then
\begin{itemize}
\item[(a)] $\pi_* \mathcal{O}_X (-\lfloor \iota D \rfloor +
K_{X|X_0})\not=\pi_* \mathcal{O}_X (-\lfloor \iota D \rfloor +
K_{X|X_0}+E_j)$ if and only if $-\lfloor \iota D \rfloor \cdot
E_j\geq 2$.

\item[(b)] Assume that $\iota$ is a jumping number. Then $E_j$
contributes $\iota$ if and only if $\iota$ is a candidate jumping number from
$E_j$ and $-\lfloor \iota D \rfloor \cdot E_j\geq 2$.

\item[(c)] If $E_j$ contributes a jumping number $\iota$ then
$E_j=F_i$ for some $i \in \{1,2,\ldots,g^*+1\}$. $\Box$

\end{itemize}

\end{prop}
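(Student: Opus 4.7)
The plan is to establish the equality of two sets by proving both inclusions, using Theorem \ref{DOS} as the main engine and Proposition \ref{CINCO}(c) to constrain the shape of any contributing divisor. The statement is an equality between the set of contributing prime exceptional divisors and the set $\{F_i \mid \iota \in \mathcal{H}_i\}$.

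For the inclusion $\supseteq$, I would simply invoke Theorem \ref{DOS}: if $\iota \in \mathcal{H}_i$, then the theorem asserts that $F_i$ contributes $\iota$, so each such $F_i$ appears in the contributing set. This direction requires no further work once Theorem \ref{DOS} is in hand.

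For the inclusion $\subseteq$, suppose $E_j$ is a prime exceptional divisor that contributes the jumping number $\iota$. By Proposition \ref{CINCO}(c), the only divisors that can contribute any jumping number are those of the form $F_i$ with $i \in \{1, 2, \ldots, g^*+1\}$; hence $E_j = F_i$ for some such $i$. Applying the converse implication of Theorem \ref{DOS} to $F_i$ (the fact that its contribution forces $\iota$ into the corresponding set $\mathcal{H}_i$) then yields $\iota \in \mathcal{H}_i$, placing $E_j$ in the set $\{F_i \mid \iota \in \mathcal{H}_i\}$.

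Since each direction is an immediate appeal to a previously established result, I do not anticipate any real obstacle; the substantive content has already been absorbed into Theorem \ref{DOS} and Clause (c) of Proposition \ref{CINCO}, and the proof reduces to the careful bookkeeping of matching a contributing divisor $E_j$ to its index $i$ and reading off the membership $\iota \in \mathcal{H}_i$ via Theorem \ref{DOS}.
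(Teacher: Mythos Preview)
Your proposal does not prove Proposition \ref{CINCO}; it proves Proposition \ref{iuiu} instead. The statement you were asked to establish consists of three clauses: (a) a numerical criterion $-\lfloor \iota D\rfloor\cdot E_j\geq 2$ for the two pushforwards to differ, (b) the equivalence between ``$E_j$ contributes $\iota$'' and that same numerical condition together with $\iota a_j\in\mathbb{Z}$, and (c) the fact that any contributing $E_j$ must be one of the $F_i$. None of these is addressed in your write-up. What you actually argue is that the set of contributing divisors equals $\{F_i\mid \iota\in\mathcal{H}_i\}$, which is exactly the content of Proposition \ref{iuiu}.

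Worse, your argument is circular relative to the paper's logical order: you invoke Proposition \ref{CINCO}(c) as an ingredient, but that is part of the very statement you are supposed to be proving. You also invoke Theorem \ref{DOS}, whose proof in the paper rests on Proposition \ref{CINCO} (it is used repeatedly throughout Section 3.1). So even read as a proof of Proposition \ref{iuiu}, your proposal matches the paper's one-line derivation of that corollary; read as a proof of Proposition \ref{CINCO}, it is vacuous. For the record, the paper does not supply its own argument for Proposition \ref{CINCO}: it states that the proof ``can be deduced from Section 3 of \cite{s-t}'' and omits it. A genuine attempt would have to engage with intersection numbers on $X$, the adjunction/vanishing input behind (a), and the combinatorics of the dual graph for (c), rather than quoting downstream results.
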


We shall divide the proof of the direct implication of the theorem
in two parts, 1 and 2.
\begin{itemize}
\item[{\bf 1.}] Assume that $g^* = g$, that is $\Gamma$ contains a
subgraph $\Gamma_{g+1}$, and consider three subcases.
\end{itemize}

\begin{itemize}
    \item[{\bf a.}] Let us prove that the divisor $F_g$ contributes  any
    $\iota \in {\mathcal H}_g$.
\end{itemize}

Consider the subgraph of $\Gamma$ whose vertices are those
corresponding to $F_g$ and the prime exceptional divisors that
meet $F_g$, that we denote by $F'_{g-2}$, $F'_{g-1}$ and
$F'_{g+1}$ (see Figure \ref{fig1}). We suppose that $F'_{g-1}$ is
the exceptional divisor created immediately before that $F_g$
(that is, $E_{st_g-1}$). Notice that, in Figure \ref{fig1},
$F'_{g-2}$ and $F'_{g-1}$ can appear interchanged but our
reasoning in that case will be similar.

\begin{figure}[h]
$$
\unitlength=1.00mm
\begin{picture}(80.00,30.00)(0,0)
\thicklines \put(30,20){\line(1,0){20}} \put(40,11){\line(0,1){8}}
\put(30,20){\circle*{1}} \put(40,20){\circle*{1}}
\put(50,20){\circle*{1}} \put(40,10){\circle*{1}}
\put(28,22){$F'_{g-2}$} \put(38,22){$F_{g}$} \put(48,22){$F'_{g+1}$}
\put(42,10){$F'_{g-1}$} \put(22,19){$\cdots$} \put(54,19){$\cdots$}
\put(40,4){$\vdots$}
\end{picture}
$$
\caption{$F_g$ contributes ${\mathcal H}_g$} \label{fig1}
\end{figure}
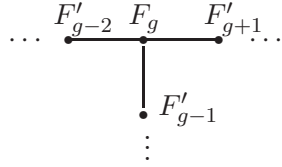

Recall that $E_j \cdot E_j = -1 - \card (\mathcal{P}_j)$,
$\mathcal{P}_j$ being the set of prime exceptional divisors which
are proximate to $E_j$ and $\card$ meaning cardinality, $E_k \cdot E_j = 1$ whenever $E_k \cap E_j
\neq \emptyset$ and $k \neq j$, and $E_k \cdot E_j = 0$ otherwise, $j,k \in \{1,2, \ldots, n\}$. By Proposition
\ref{CINCO} we see that the inequality we have to prove is
\begin{equation}
\label{uno} - \lfloor \iota \nu (\varphi_{-2}) \rfloor - \lfloor
\iota \nu (\varphi_{-1}) \rfloor- \lfloor \iota \nu (\varphi_1)
\rfloor + 2 \lfloor \iota \nu (\varphi) \rfloor \geq 2,
\end{equation}
$\varphi=\varphi_0$ being an $F_g$-general element for $\nu$ and
$\varphi_l$  an $F'_{g+l}$-general element for $\nu$, $l\in
\{-2,-1,1\}$. In fact, we shall prove that the equality holds in
(\ref{uno}).

Set, as above, $\{\bar{\beta}_i\}_{i=0}^{g+1}$ the sequence of
maximal contact values of $\nu$ and denote by
$\bar{\beta}_i^{\varphi_l}$, $l\in \{-2,-1,1\}$, the $i$-th maximal
contact value of the divisorial valuation $\nu_{\varphi_l}$ defined
by $F'_{g+l}$ (notice that $\varphi_l$ is a general element of this
valuation).
%Set $S$ ($S_{\varphi_l}$, respectively) the semigroup of values of
%$\nu$ (the germ of curve defined by $\varphi_l$, respectively).
%Recall that the $g+1$ first maximal contact values of the valuation
%$\nu$ (of the corresponding germ of curve, respectively)
%$\{\bar{\beta}_i\}_{i=0}^g$
%($\{\bar{\beta}^{\varphi_l}_i\}_{i=0}^g$, respectively) form a
%minimal system of generators of $S$ ($S_{\varphi_l}$, respectively)
% (see \cite{spi} and \cite{cam}).
Also, remind that $e_i := \gcd (\bar{\beta}_0, \ldots,
\bar{\beta}_i)$ and $n_i:= e_{i-1}/e_i$, and  denote with a
super-index $\varphi_l$  the analogous values attached to
$\nu_{\varphi_l}$.

By \cite{d} and taking into account that for $h \in R$, $$ \nu(h) =
\min \{(h, \psi) | \psi \mbox{ is a general element of $\nu$} \},$$
$(h, \psi)$ being the intersection multiplicity of the germs given
by $h$ and $\psi$ \cite{spi}, we get the following equalities:
\[
\nu(\varphi_{-2})=e_{g-1} \bar{\beta}_g^{\varphi_{-2}};
\]
\[
\nu(\varphi_{-1})=e_{g-1}^{\varphi_{-1}} \bar{\beta}_g;
\]
\[
\nu(\varphi_{1})=e_{g-1}^{\varphi_{1}} \bar{\beta}_g + 1;
\]
\[
\nu(\varphi)=e_{g-1}^{\varphi} \bar{\beta}_g = n_g \bar{\beta}_g .
\]

Now, we state a result which will be useful in the proof.

\begin{lem}
\label{SEIS} $n_g^{\varphi_{-2}} \bar{\beta}_g - n_g
\bar{\beta}_g^{\varphi_{-2}} =1$.
\end{lem}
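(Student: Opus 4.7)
The plan is to exploit the fact that the valuations $\nu$ and $\nu_{\varphi_{-2}}$ arise from nested sequences of point blow-ups. Since $F_g$ is a satellite divisor proximate precisely to $F'_{g-2}$ and $F'_{g-1}$, the resolution defining $\nu_{\varphi_{-2}}$ is a prefix (up to the creation of $F'_{g-2}$) of the resolution defining $\nu$. Consequently, the two valuations share their initial maximal contact values: $\bar\beta_i = \bar\beta_i^{\varphi_{-2}}$ for $0 \leq i \leq g-1$, and in particular $e_{g-1} = e_{g-1}^{\varphi_{-2}}$. This is the starting point.

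Next, I would compute $\bar\beta_g^{\varphi_{-2}}$ explicitly in terms of the data of $\nu$. Writing $\nu(\varphi_{-2}) = e_{g-1}\bar\beta_g^{\varphi_{-2}}$ (which appears in the preceding list of formulas) and using the Noether-type relation at the satellite divisor $F_g$ — which, because $F_g$ is proximate to both $F'_{g-2}$ and $F'_{g-1}$, expresses $\bar\beta_g$ as a combination of $e_{g-1}$, $\bar\beta_{g-1}$ and the intersection data at $F_g$ — one obtains $\bar\beta_g$ and $\bar\beta_g^{\varphi_{-2}}$ as the numerators of the two rational numbers $\bar\beta_g/e_{g-1}$ and $\bar\beta_g^{\varphi_{-2}}/e_{g-1}$ whose denominators in lowest terms are exactly $n_g$ and $n_g^{\varphi_{-2}}$.

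The identity $n_g^{\varphi_{-2}} \bar\beta_g - n_g \bar\beta_g^{\varphi_{-2}} = 1$ is then a Bezout / Farey-neighbor relation: the two fractions $\bar\beta_g^{\varphi_{-2}}/n_g^{\varphi_{-2}}$ and $\bar\beta_g/n_g$ (both with common multiple $e_{g-1}$) are consecutive convergents of the continued fraction expansion of the pair $(\bar\beta_g, e_{g-1})$, or equivalently are the two ratios encoded by consecutive rows of the proximity matrix on the chain of infinitely near points between $F_{g-1}$ and $F_g$. Bezout's identity for such consecutive convergents yields $|n_g^{\varphi_{-2}} \bar\beta_g - n_g \bar\beta_g^{\varphi_{-2}}| = 1$, and the sign can be pinned down to $+1$ by checking that $n_g \bar\beta_g^{\varphi_{-2}} < n_g^{\varphi_{-2}} \bar\beta_g$ (which follows since $\bar\beta_g$ is strictly larger than $\bar\beta_g^{\varphi_{-2}}$ after normalization, $F'_{g-2}$ appearing earlier in the resolution than $F_g$).

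The main technical obstacle is the bookkeeping needed to identify $(n_g^{\varphi_{-2}}, \bar\beta_g^{\varphi_{-2}})$ unambiguously with the ``previous convergent'' in the Euclidean algorithm applied to $(\bar\beta_g, e_{g-1})$, which in turn requires a careful use of the proximity relations of the infinitely near points between $F_{g-1}$ and $F_g$. Once this identification is made, the Bezout identity is automatic and the sign verification is a routine inequality check.
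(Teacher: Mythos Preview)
Your overall strategy---identify a pair of fractions as consecutive convergents of a continued fraction and then invoke the Bezout determinant identity---is exactly the paper's approach. However, the first paragraph contains a factual error that breaks the argument as written: the claim $\bar\beta_i = \bar\beta_i^{\varphi_{-2}}$ for $0 \leq i \leq g-1$ is false. Although the resolution for $\nu_{\varphi_{-2}}$ is indeed a prefix of that for $\nu$, the maximal contact values are values of the same curvettes under \emph{different} valuations, and they are only proportional: $\bar\beta_i^{\varphi_{-2}} = \kappa\,\bar\beta_i$ for $i<g$ with $\kappa = \bar\beta_0^{\varphi_{-2}}/\bar\beta_0$. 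Since $e_g=e_g^{\varphi_{-2}}=1$ one has $\bar\beta_0 = n_1\cdots n_g$ and $\bar\beta_0^{\varphi_{-2}} = n_1\cdots n_{g-1}\,n_g^{\varphi_{-2}}$, so $\kappa = n_g^{\varphi_{-2}}/n_g \neq 1$ in general. In particular $e_{g-1}\neq e_{g-1}^{\varphi_{-2}}$, and your proposed identification of $\bar\beta_g/n_g$ and $\bar\beta_g^{\varphi_{-2}}/n_g^{\varphi_{-2}}$ as convergents of the Euclidean algorithm for the single pair $(\bar\beta_g, e_{g-1})$ does not go through.

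The paper's fix is to pass to the Puiseux exponents. Using the identity $\bar\beta_g/n_g = \beta'_g + (n_{g-1}/n_g)\bar\beta_{g-1} - 1$ from \cite{d-g-n} and its analogue for $\varphi_{-2}$, one checks that the additive correction terms coincide (the $\kappa$-scaling cancels once $e_g=e_g^{\varphi_{-2}}=1$), so that $\bar\beta_g/n_g - \bar\beta_g^{\varphi_{-2}}/n_g^{\varphi_{-2}} = \beta'_g - {\beta'_g}^{\varphi_{-2}}$. It is $\beta'_g$ and ${\beta'_g}^{\varphi_{-2}}$---not the $\bar\beta_g$'s themselves---that are literally consecutive convergents of a finite continued fraction with denominators $n_g$ and $n_g^{\varphi_{-2}}$, whence $\beta'_g - {\beta'_g}^{\varphi_{-2}} = 1/(n_g n_g^{\varphi_{-2}})$ with the correct sign built in. So the ``bookkeeping obstacle'' you flagged is precisely where the repair is needed; once you replace equality of the lower $\bar\beta_i$'s by proportionality and reduce to Puiseux exponents, your sketch becomes the paper's proof.
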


\proof Set $\beta^{\prime}_i$ and
${{\beta}^{\prime}_i}^{\varphi_{-2}}$ the Puiseux exponents of the
valuations $\nu$ and $\nu_{\varphi_{-2}}$ (see \cite{spi} for the
definition). Lemma 1.8 in \cite{d-g-n} proves that

\begin{equation}\label{pirri1}
\frac{\bar{\beta}_g}{n_g} = \beta^{\prime}_g + \frac{n_{g-1}}{n_g}
\bar{\beta}_{g-1}-1
\end{equation}
and, analogously,
\[
\frac{\bar{\beta}_g^{\varphi_{-2}}}{n_g^{\varphi_{-2}}} =
 {\beta^{\prime}_g}^{\varphi_{-2}}+ \frac{n_{g-1}^{\varphi_{-2}}}{n_g^{\varphi_{-2}}}
  \bar{\beta}_{g-1}^{\varphi_{-2}}-1.
\]
Since $\bar{\beta}_i^{\varphi_{-2}} = \kappa \bar{\beta}_i$, for $i <g$, where
$\kappa= \bar{\beta}_0^{\varphi_{-2}}/ \bar{\beta}_0$ (see
\cite{cam}, for example), one gets
\begin{equation}\label{pirri2}
\frac{n_{g-1}}{n_g} \bar{\beta}_{g-1} =
\frac{n_{g-1}^{\varphi_{-2}}}{n_g^{\varphi_{-2}}}
\bar{\beta}_{g-1}^{\varphi_{-2}}
\end{equation}
since $e_g = e_g^{\varphi_{-2}}=1$. So
\[
\frac{\bar{\beta}_g}{n_g} -
\frac{\bar{\beta}_g^{\varphi_{-2}}}{n_g^{\varphi_{-2}}} = \beta'_g -
{\beta^{\prime}_g}^{\varphi_{-2}}.
\]
Bearing in mind that $\beta'_g$ and
${\beta^{\prime}_g}^{\varphi_{-2}}$ are consecutive convergents of a
finite continued fraction whose denominators are respectively $n_g$
and $n_g^{\varphi_{-2}}$, by \cite[Th. 7.5]{ni-zu} the equality
\begin{equation}\label{pirri3}
\beta'_g - {\beta^{\prime}_g}^{\varphi_{-2}} = \frac{1}{n_g
n_g^{\varphi_{-2}}}
\end{equation}
holds. The statement follows from (\ref{pirri1}), (\ref{pirri2}) and
(\ref{pirri3}). $\Box$
\\[2mm]

Returning to the proof of our theorem, we recall  that in \cite{ja}
it is proved that $\iota \in \mathcal{H}_g$ has the form
$\iota(g,p,q,r)$ (see page \pageref{FOUR} in this paper).  Since $e_g = 1$ we get
$$
\iota = \frac{(p+rn_g)\bar{\beta}_g+ qn_g}{n_g\bar{\beta}_g}.
$$
For simplicity's sake we set $s:=p+rn_g$. Now stand $\alpha$ and
$\beta$ for $\alpha:= \bar{\beta}_g^{\varphi_{-2}}/ \bar{\beta}_g$
and $\beta:= e_{g-1}^{\varphi_{-1}}/n_g$. Inequality (\ref{uno}) to
be proved can be expressed
\begin{equation}
\label{dos} -\lfloor (qn_g + s \bar{\beta}_g) \alpha \rfloor -
\lfloor (qn_g + s \bar{\beta}_g) \beta \rfloor  -\lfloor (qn_g + s
\bar{\beta}_g) (1+ \frac{1}{n_g \bar{\beta}_g} )\rfloor + 2 (qn_g +
s \bar{\beta}_g) \geq 2.
\end{equation}
For $\iota \in \mathcal{H}_g$, it is necessary that
$(p/n_g)+(q/\bar{\beta}_g) \leq 1$, which is true if and only if
$p\bar{\beta}_g+q n_g \leq n_g \bar{\beta}_g$. However, equality can
not happen in this case because $p \geq 1$, $q\geq 1$ and
$\gcd(\bar{\beta}_g,n_g)=1$.

Let $C$ be a germ of curve given by a general element of $\nu$. It
holds that $\pi^* C= \tilde{C} + D$, where $\tilde{C}$ denotes the
strict transform of $C$ by $\pi$ and $D$ the attached to $\wp$ above
mentioned divisor. Then
\[
(\pi^*C) \cdot F_g = \tilde{C} \cdot F_g + D \cdot F_g =0.
\]
Since $(-D) \cdot F_g = -\nu(\varphi_{-2}) -\nu(\varphi_{-1})
-\nu(\varphi_{1}) + 2 \nu(\varphi)$, we obtain
\begin{equation}
\label{cinco} \alpha + \beta = 1 - \frac{1}{n_g \bar{\beta}_g }.
\end{equation}

Set $\alpha_1 := \alpha + (1/n_g \bar{\beta}_g)$ and $\beta_1 :=
\beta$. By Lemma \ref{SEIS}, one has
\[
(qn_g + s \bar{\beta}_g) \alpha =\frac{q n_g
\bar{\beta}_{g}^{\varphi_{-2}}}{\bar{\beta}_g} + s
\bar{\beta}_{g}^{\varphi_{-2}} = q n_{g}^{\varphi_{-2}} + s
\bar{\beta}_{g}^{\varphi_{-2}} - \frac{q}{\bar{\beta}_g}. \] Thus,
\[
(qn_g + s \bar{\beta}_g) \alpha_1 = q n_{g}^{\varphi_{-2}} + s
\bar{\beta}_{g}^{\varphi_{-2}} - \frac{q}{\bar{\beta}_g} +
\frac{qn_g + s \bar{\beta}_g}{n_g \bar{\beta}_g} = q
n_{g}^{\varphi_{-2}} + s \bar{\beta}_{g}^{\varphi_{-2}} +
\frac{s}{n_g}.
\]
Recall that $s=p + rn_g$, $p< e_{g-1}$, $q< \bar{\beta}_g$ and $p
\bar{\beta}_g + q n_g < n_g \bar{\beta}_g$. Then
\begin{equation}
\label{estrella1} \lfloor (qn_g + s \bar{\beta}_g) \alpha_1 \rfloor
+ \lfloor (qn_g + s \bar{\beta}_g) \beta_1\rfloor = qn_g + s
\bar{\beta}_g -1,
\end{equation}
\begin{equation}
\label{estrella2} \lfloor (qn_g + s \bar{\beta}_g) \alpha_1 \rfloor
= q n_{g}^{\varphi_{-2}} + s \bar{\beta}_{g}^{\varphi_{-2}} +r,
\end{equation}
\begin{equation}
\label{estrella3} \lfloor (qn_g + s \bar{\beta}_g) \alpha  \rfloor
= q n_{g}^{\varphi_{-2}} + s \bar{\beta}_{g}^{\varphi_{-2}} -1
\end{equation}
and
\begin{equation}
\label{estrella4} \lfloor (qn_g + s \bar{\beta}_g) (1+
\frac{1}{n_g\bar{\beta}_g}) \rfloor = \lfloor q n_{g}+ s
\bar{\beta}_{g} + \frac{q n_{g}+ s \bar{\beta}_{g}}{n_g
\bar{\beta}_{g}} \rfloor =
\end{equation}
\[
= \lfloor q n_{g}+ s \bar{\beta}_{g} + \frac{q}{\bar{\beta}_{g}} +
\frac{p}{n_g}+r \rfloor = qn_g + s \bar{\beta}_g +r.
\]
From (\ref{estrella1}), (\ref{estrella2}) and (\ref{estrella3}), we
get
\[
-\lfloor (qn_g + s \bar{\beta}_g) \alpha \rfloor - \lfloor (qn_g + s
\bar{\beta}_g)\beta \rfloor = -(qn_g + s \bar{\beta}_g) +r +2.
\]
This concludes the proof of this case since now (\ref{estrella4})
proves equality in (\ref{dos}).
\begin{itemize}
    \item[{\bf b.}] Now, we are going to prove that for any $i <g$ the divisor $F_i$
     contributes  any $\iota \in {\mathcal H}_i$.
\end{itemize}

With the same conventions and notations as above, consider the
subgraph of $\Gamma$ of vertices corresponding  to $F_i$ and those divisors which meet it (see Figure
\ref{fig2}).
\begin{figure}[h]
$$
\unitlength=1.00mm
\begin{picture}(80.00,30.00)(0,0)
\thicklines \put(30,20){\line(1,0){20}} \put(40,11){\line(0,1){8}}
\put(30,20){\circle*{1}} \put(40,20){\circle*{1}}
\put(50,20){\circle*{1}} \put(40,10){\circle*{1}}
\put(28,22){$F'_{i-2}$} \put(38,22){$F_{i}$} \put(48,22){$F'_{i+1}$}
\put(42,10){$F'_{i-1}$} \put(22,19){$\cdots$} \put(54,19){$\cdots$}
\put(40,4){$\vdots$}
\end{picture}
$$
\caption{$F_i$ contributes  $
{\mathcal H}_i$}
\label{fig2}
\end{figure}
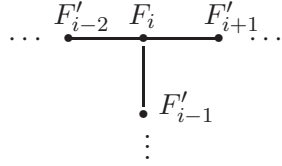

\begin{itemize}
    \item[{\bf b1.}] Firstly, we assume that the divisor $F'_{i+1}$ is free.
\end{itemize}
Set $\vv$ the valuation given by the last free divisor represented
in $\Gamma_{i+1}$. Notice that for any analytically irreducible
element $h$ in $R$ whose strict transform (the associated germ of
curve) cuts  transversally some of the divisors created to define
$\vv$, the equality $\nu(h) = (\bar{\beta}_0^\nu /
\bar{\beta}_0^\vv) \vv(h)$ holds. Along this proof, we set $\kappa:=
\bar{\beta}_0^\nu / \bar{\beta}_0^\vv$ and any jumping number
$\iota^\nu=\iota(i,p,q,r)$ in the set $\mathcal{H}_i$ satisfies:
\[
\iota^\nu = \frac{(p+rn_i^\nu)\bar{\beta}_i^\nu+ q
e^\nu_{i-1}}{e^\nu_{i-1}\bar{\beta}_i^\nu} = \frac{(p+rn_i^\vv)
\kappa \bar{\beta}_i^\vv+ q \kappa e^\vv_{i-1}}{\kappa^2
e^\vv_{i-1}\bar{\beta}_i^\vv} = \frac{1}{\kappa} \iota^\vv,
\]
where the super-indices $\nu$ and $\vv$ make reference to the
valuation that corresponds to the used values, and $\iota^\vv$ is
the jumping number
$[(p+rn_i^\vv)\bar{\beta}_i^\vv+qe_{i-1}^\vv]/(e_{i-1}^\vv
\bar{\beta}_i^{\vv})$ of the simple complete ideal defined by $\vv$,
which belongs to the corresponding set ${\mathcal H}_i^{\vv}$.

Now, our result is proved  because inequality (\ref{uno}) in our
case coincides with the same inequality for $\vv$, which
accomplishes it since the valuation $\vv$ is in the situation 1a.

\begin{itemize}
    \item[{\bf b2.}] Now suppose that $F'_{i+1}$ is a satellite divisor.
\end{itemize}
We can assume that $i=g-1$, because when $i<g-1$ a reasoning as
in 1b1 would finish the proof.

Again, set $\vv$ the valuation defined by the free divisor
represented in the graph $\Gamma_g$. Keeping our notation, we must
prove
\[
 - \lfloor \iota \nu (\varphi_{-2}) \rfloor - \lfloor \iota \nu
(\varphi_{-1}) \rfloor- \lfloor \iota \nu (\varphi_1) \rfloor +
(t+1)  \iota \nu (\varphi) \geq 2,
\]
where $t$ is the cardinality of the set of  prime
exceptional divisors in (\ref{spip}) proximate to $F_{g-1}$. The worst case
happens when $t=2$, so we assume it. By using the valuation $\vv$
and taking into account that $\iota^\nu = \frac{1}{\kappa}
\iota^\vv$, the reasoning in 1a shows that
\[
 \iota \nu (\varphi)   - \lfloor \iota \nu
(\varphi_{-2}) \rfloor -  \lfloor \iota \nu (\varphi_{-1}) \rfloor
\geq 2 + r.
\]
Looking at the sequence of values for $\nu$ (see
\cite[1.5.1]{d-g-n}), set $a$ ($b \geq a$, respectively) for the
value at the divisor where the strict transform of the germ given by
$\varphi_1$ intersects transversally (for the value at the defining
divisor of $\vv$, respectively). Then
\[
  \iota \nu (\varphi)   = q e_{g-2}^\vv+ s
\bar{\beta}_{g-1}^\vv,
\]
where $s$ is obtained as above. Now, since \begin{equation}
\label{cuatro} \nu (\varphi_1) = 2 \nu (\varphi) + a + b,
\end{equation}
because $t=2$, we get
\begin{equation}
\label{cuatroprima} \iota \nu (\varphi_1) = 2 (q e_{g-2}^\vv+ s
\bar{\beta}_{g-1}^\vv) + \iota a + \iota b = 2 (q
e_{g-2}^\vv+ s \bar{\beta}_{g-1}^\vv) \left[1 + \frac{a+b}{2 \kappa
e_{g-2}^\vv \bar{\beta}_{g-1}^\vv}\right].
\end{equation}
It is clear that $(a+b) \bar{\beta}_{0}^\vv = \bar{\beta}_{0}^\nu$
and then the right hand side in (\ref{cuatroprima}) equals
\[
 2 (q e_{g-2}^\vv+ s
\bar{\beta}_{g-1}^\vv) \left[1 + \frac{a+b}{2(a+b) e_{g-2}^\vv
\bar{\beta}_{g-1}^\vv}\right] = 2 (q e_{g-2}^\vv+ s
\bar{\beta}_{g-1}^\vv) \left[1 + \frac{1}{2 e_{g-2}^\vv
\bar{\beta}_{g-1}^\vv}\right].
\]
So, $\lfloor \iota \nu(\varphi_1) \rfloor \leq 2 (q e_{g-2}^\vv+ s
\bar{\beta}_{g-1}^\vv) + r $ since
\[
\frac{(q e_{g-2}^\vv+ s \bar{\beta}_{g-1}^\vv)}{e_{g-2}^\vv
\bar{\beta}_{g-1}^\vv}
\]
is the jumping number $\iota^\vv$ that, by the conditions given in
\cite{ja}, is less than or equal to $r$.

It only remains to prove that
\begin{itemize}
    \item[{\bf c.}] $F_{g+1}$ contributes any jumping number $\iota \in
    \mathcal{H}_{g+1}$.
\end{itemize}
Indeed, $\iota$ has the form $\iota= (p/e_g)
+(q/\bar{\beta}_{g+1})$, $p,q
> 1$ and $e_g=1$, so $\iota= (q+ p \bar{\beta}_{g+1})/
\bar{\beta}_{g+1}$. Keeping the above notations, the subgraph we are
interested in is the one depicted in Figure \ref{fig3}.
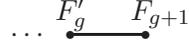
\begin{figure}[h]
$$
\unitlength=1.00mm
\begin{picture}(80.00,20.00)(0,0)
\thicklines \put(30,10){\line(1,0){10}}
%\put(40,11){\line(0,1){8}}
\put(30,10){\circle*{1}} \put(40,10){\circle*{1}}
%\put(50,20){\circle*{1}} \put(40,10){\circle*{1}}
\put(28,12){$F'_{g}$} \put(38,12){$F_{g+1}$}
%\put(48,22){$F'_{i+1}$} \put(42,10){$F'_{i-1}$}
\put(22,9){$\cdots$} %\put(44,9){$\cdots$} %\put(40,4){$\vdots$}
\end{picture}
$$
\caption{$F_{g+1}$ contributes  $\mathcal{H}_{g+1}$} \label{fig3}
\end{figure}
As above set  $\varphi$ and $\varphi_{-1}$ analytically
irreducible elements of type $\varphi_i$ attached to the divisors
$F_{g+1}$ and $F'_{g}$ respectively. Then, we must prove
\[
- \lfloor \iota \nu(\varphi_{-1}) \rfloor +  \iota \nu(\varphi)
 \geq 2.
\]
And this happens since
\[
\iota \nu(\varphi_{-1}) = \frac{q+ p
\bar{\beta}_{g+1}}{\bar{\beta}_{g+1}} (\bar{\beta}_{g+1} -1),
\]
$ \iota \nu(\varphi) = q+ p \bar{\beta}_{g+1}$ and $p \geq 1$.

\begin{itemize}
    \item[{\bf 2.}] To end the proof of the direct implication, let us assume that  $g^* =g-1$.
\end{itemize}
We shall only prove that the divisor $F_g$ contributes any $\iota
\in \mathcal{H}_g$. A proof for the case of the remaining divisors
$F_i$, $i<g$, works similarly to the analogous case in 1.

Here, the subgraph of $\Gamma$ of vertices corresponding to $F_g$ and divisors meeting it will be
the one in Figure \ref{fig4}
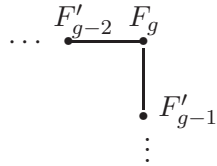
\begin{figure}[h]
$$
\unitlength=1.00mm
\begin{picture}(80.00,30.00)(0,0)
\thicklines \put(30,20){\line(1,0){10}} \put(40,11){\line(0,1){8}}
\put(30,20){\circle*{1}} \put(40,20){\circle*{1}}
%\put(50,20){\circle*{1}}
\put(40,10){\circle*{1}} \put(28,22){$F'_{g-2}$}
\put(38,22){$F_{g}$}
%\put(48,22){$F'_{i+1}$}
\put(42,10){$F'_{g-1}$} \put(22,19){$\cdots$}
%\put(44,19){$\cdots$}
\put(40,4){$\vdots$}
\end{picture}
$$
\caption{$F_g$ contributes  $\mathcal{H}_g$, when $g^* = g-1$}
\label{fig4}
\end{figure}
and with the same notations of 1a, we must prove
\[
- \lfloor \iota \nu (\varphi_{-2}) \rfloor - \lfloor \iota \nu
(\varphi_{-1}) \rfloor + \iota \nu (\varphi)  \geq 2,
\]
that is
\[
- \lfloor(q n_{g}+ s \bar{\beta}_{g}) \alpha \rfloor  - \lfloor(q
n_{g}+ s \bar{\beta}_{g}) \beta \rfloor +q n_{g}+ s \bar{\beta}_{g}
\geq 2.
\]
Finally, since
\[
1= (-D) \cdot F_g = - \nu (\varphi_{-2}) - \nu (\varphi_{-1}) + \nu
(\varphi),
\]
the same reasoning we did to show (\ref{cinco}) allows to set $n_{g}
\bar{\beta}_{g} (-\alpha -\beta +1)=1$, where $\alpha$ and $\beta$
are as above, and so we also get $\alpha + \beta = 1 - (1/n_{g}
\bar{\beta}_{g})$, i.e.,
$n_g\bar{\beta}_{g}^{\varphi_{-2}}+\bar{\beta}_{g}e_{g-1}^{\varphi_{-1}}=n_g\bar{\beta}_{g}-1$.
Then,
$$ \lfloor(q n_{g}+ s \bar{\beta}_{g}) \alpha \rfloor + \lfloor(q
n_{g}+ s \bar{\beta}_{g}) \beta\rfloor -(qn_g+s\bar{\beta}_{g})=$$

$$=\lfloor \frac{q\bar{\beta}_{g}^{\varphi_{-2}}n_g}{\bar{\beta}_{g}}+s\bar{\beta}_{g}^{\varphi_{-2}} \rfloor + \lfloor qe_{g-1}^{\varphi_{-1}}+ \frac{se_{g-1}^{\varphi_{-1}}\bar{\beta}_{g}}{n_g} \rfloor -(qn_g+s\bar{\beta}_{g})=$$

$$ =s\bar{\beta}_{g}^{\varphi_{-2}}+qe_{g-1}^{\varphi_{-1}}+ \lfloor \frac{q(n_g\bar{\beta}_{g}-1)}{\bar{\beta}_{g}}-qe_{g-1}^{\varphi_{-1}}\rfloor + \lfloor \frac{s(n_g\bar{\beta}_{g}-1)}{n_g}-s\bar{\beta}_{g}^{\varphi_{-2}}\rfloor - (qn_g+s\bar{\beta}_{g}) =$$

$$= \lfloor qn_g-\frac{q}{\bar{\beta}_{g}}\rfloor + \lfloor s\bar{\beta}_{g}-\frac{s}{n_g}
\rfloor - (qn_g+s\bar{\beta}_{g}) \leq (qn_g-1) + (s\bar{\beta}_{g}-1)-(qn_g+s\bar{\beta}_{g})
=-2.$$
\\

In order to prove the converse implication we shall consider two
previous lemmas.

\begin{lem}\label{ideafix}
Consider an $n$-tuple of non-negative integers $(\alpha_1,\alpha_2,\ldots,
\alpha_n)$ and let $D(\alpha)$ be the divisor $-\sum_{j=1}^n
\alpha_j E_j$. For every nonempty subset $\{F_{i_1}, F_{i_2},\ldots,
F_{i_t}\}$ of the set of divisors $\{F_i\}_{i=1}^{g^*+1}$, the following equality holds:
\begin{equation}\label{DIM}
\dim_{\mathbb{C}} \frac{\pi_* \mathcal{O}_X (D(\alpha)+ \sum_{l=1}^t
F_{i_l})}{\pi_* \mathcal{O}_X (D(\alpha))} = \sum_{l=1}^t
\dim_{\mathbb{C}} \frac{\pi_* \mathcal{O}_X
(D(\alpha)+F_{i_l})}{\pi_* \mathcal{O}_X (D(\alpha))}.
\end{equation}
\end{lem}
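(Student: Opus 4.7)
My plan is to proceed by induction on $t$, using as the key geometric input the fact that the divisors $F_{i_l}$ are pairwise disjoint on $X$. This disjointness follows from the structure of the dual graph $\Gamma$ depicted in Figure~\ref{fig0}: each $F_i=E_{st_i}$ with $1\le i\le g^*$ sits at a star vertex, and no two star vertices are adjacent in $\Gamma$, while $F_{g^*+1}=E_n$ lives in $\Gamma_{g^*+1}$ and only meets its neighbour in that subgraph, never an $E_{st_i}$. Since two exceptional divisors meet if and only if the corresponding vertices of $\Gamma$ are joined by an edge, this forces $F_{i_l}\cap F_{i_{l'}}=\emptyset$ whenever $l\neq l'$.

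The base case $t=1$ is a tautology. For the inductive step, set $F:=F_{i_t}$ and $A:=\sum_{l=1}^{t-1}F_{i_l}$. Since $A|_F=0$ by disjointness, we have the equality of restrictions $(D(\alpha)+A+F)|_F=(D(\alpha)+F)|_F$, and the standard exact sequences
\[
0\to\mathcal{O}_X(D(\alpha)+A)\to\mathcal{O}_X(D(\alpha)+A+F)\to\mathcal{O}_F\bigl((D(\alpha)+F)|_F\bigr)\to 0
\]
and
\[
0\to\mathcal{O}_X(D(\alpha))\to\mathcal{O}_X(D(\alpha)+F)\to\mathcal{O}_F\bigl((D(\alpha)+F)|_F\bigr)\to 0
\]
have identical third terms. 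Pushing forward by $\pi$ and assuming the two vanishings $R^1\pi_*\mathcal{O}_X(D(\alpha)+A)=0$ and $R^1\pi_*\mathcal{O}_X(D(\alpha))=0$, both push-forward sequences become short exact, so that the quotients $\pi_*\mathcal{O}_X(D(\alpha)+A+F)/\pi_*\mathcal{O}_X(D(\alpha)+A)$ and $\pi_*\mathcal{O}_X(D(\alpha)+F)/\pi_*\mathcal{O}_X(D(\alpha))$ are both isomorphic to $\pi_*\mathcal{O}_F((D(\alpha)+F)|_F)$ and in particular have the same $\mathbb{C}$-dimension. Combining this with the inductive hypothesis applied to the $(t-1)$-tuple $(F_{i_1},\ldots,F_{i_{t-1}})$ then yields the equality (\ref{DIM}).

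The main obstacle is justifying the required $R^1\pi_*$ vanishing in a form general enough to cover every anti-effective $D(\alpha)$ together with the effective addition $A$ (a disjoint union of smooth rational curves). This is where I expect the work to concentrate: one must observe that $D(\alpha)+A$ and $D(\alpha)$ are both of a shape to which the local vanishing theorem for birational morphisms of smooth surfaces applies, invoking Grauert--Riemenschneider (or the Lipman--Artin framework for rational singularities) after absorbing the effective contribution of $A$ into a suitable rewriting $K_{X|X_0}-\lfloor N\rfloor$ for a $\Q$-divisor $N$ with simple normal crossings support, precisely in the spirit of \cite[Ch.~9]{la}. Once the vanishing is in place the rest of the argument is essentially formal, and no further interaction among the $F_{i_l}$'s is needed because their pairwise disjointness has already decoupled the contributions.
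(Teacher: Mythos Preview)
Your key geometric input---that the divisors $F_i$ are pairwise disjoint on $X$---is false in general. Two consecutive star vertices can be adjacent in $\Gamma$. Here is the simplest mechanism: starting from $E_{st_i}$, blow up one free point to obtain $E_{st_i+1}$, then the satellite point $E_{st_i}\cap E_{st_i+1}$ to obtain $E_{st_i+2}$, and then a free point on $E_{st_i+2}$. Now $E_{st_i+2}$ is satellite while $E_{st_i+3}$ is free, so $st_{i+1}=st_i+2$; and in the dual graph $E_{st_i+2}$ remains adjacent to $E_{st_i}$ (the blow-up creating $E_{st_i+2}$ separates $E_{st_i}$ from $E_{st_i+1}$ and inserts $E_{st_i+2}$ between them, and since $E_{st_i+3}$ is free no later center in the simple sequence lies on $E_{st_i}$). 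Hence $F_i\cap F_{i+1}\neq\emptyset$. Likewise $F_{g^*+1}=E_n$ is adjacent to $F_{g^*}$ whenever $\Gamma_{g^*+1}$ consists of a single vertex, contrary to your assertion that $E_n$ never meets an $E_{st_i}$. Once $A|_F\neq 0$, the two short exact sequences you write down have genuinely different cokernel sheaves on $F\cong\mathbb{P}^1$, and the equality of dimensions in your inductive step no longer follows. (The $R^1\pi_*$ vanishing you flag is, by contrast, unproblematic: each coefficient $\kappa_j$ of $K_{X|X_0}$ is at least $1$, so $D(\alpha)+A=K_{X|X_0}-N$ with $N$ effective integral and SNC, and local vanishing applies.)

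The paper avoids the adjacency issue entirely by arguing valuation-theoretically rather than via restriction to $F$. For $t=2$ with $i_1<i_2$, it exhibits a monomial-type basis of $\E_\alpha(F_{i_1})/\E_\alpha$ coming from \cite[Th.~1]{ga} and observes that every such monomial has a fixed $\nu_{i_2}$-value which is at least $\alpha_{st_{i_2}}$; hence the same classes remain linearly independent modulo $\E_\alpha(F_{i_2})$, and no monomial can simultaneously have $\nu_{i_1}$-value $\alpha_{st_{i_1}}-1$ and $\nu_{i_2}$-value $\alpha_{st_{i_2}}-1$. This yields $p_{i_1}=q_{i_1}$ without any disjointness hypothesis, and the case $t>2$ is then obtained by iterating the $t=2$ step along a ladder of commutative squares.
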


\begin{proof}
Without loss of generality  we can assume that $i_1<i_2<\cdots<i_t$. In a first step, we prove
(\ref{DIM}) for $t=2$.

Consider the following commutative diagram of ideals in $R$ and
injective maps, where, for any sum $G$ of divisors $E_j$, we stand
$\E_\alpha (G)$ for $\pi_* \mathcal{O}_X (D(\alpha)+ G)$ and an
expression like $A \xrightarrow[]{{[p]}} B$ means that the dimension
of the vector space quotient $B/A$ equals $p$.

\[
\begin{CD}
\E_\alpha :=\E_\alpha (\emptyset)  @> [p_{i_1}]>> \E_\alpha
(F_{i_1}) \\
@V[p_{i_2}]VV @V [q_{i_2}]VV \\
\E_\alpha (F_{i_2}) @> [q_{i_1}]>> \E_\alpha (F_{i_1}+ F_{i_2}).
\end{CD}
\]
\\

We only need to prove that
\begin{equation}
\label{EQ}
 p_{i_1} = q_{i_1},
\end{equation}

because it holds the following vector space isomorphism
\[
\frac{ \E_\alpha(F_{i_1} + F_{i_2})}{ \E_\alpha } \cong
\frac{\E_\alpha ( F_{i_1} + F_{i_2})}{\E_\alpha ( F_{i_2})} \oplus
\frac {\E_\alpha ( F_{i_2})}{\E_\alpha}.
\]
The symmetric isomorphism given by the diagram proves that $ p_{i_2}
= q_{i_2}$ is also true.

(\ref{EQ}) holds when either $\alpha_{st_{i_1}}=0$ or
$\alpha_{st_{i_2}}=0$, therefore we can assume that both values are
positive. Set $\nu_{i_1}$ and $\nu_{i_2}$ the divisorial valuations
defined by $F_{i_1}$ and $F_{i_2}$, respectively. By the proof of
\cite[Th. 1]{ga}, a basis of the vector space $\E_\alpha(F_{i_1})/
\E_\alpha $ is given by classes defined by ``monomials" of the type
$\prod_{k=0}^{g'+1} \varphi_{l_k}^{a_k}$, $a_k\geq 0$, where $l_k$
and $\varphi_{l_k}$, $0\leq k\leq g'+1$, are elements as in the
paragraph after Definition \ref{general} but associated with the
valuation $\nu_{i_1}$ (in particular $\varphi_{l_{g'+1}}$ denotes a
general element of $\nu_{i_1}$). Clearly
$\nu_{i_1}(\prod_{k=0}^{g'+1} \varphi_{l_k}^{a_k}) =
\alpha_{st_{i_1}} -1$. Taking into account that $F_{i_2}$
corresponds either to a star vertex (as $F_{i_1}$) or to the last
vertex of  the dual graph of $\nu$ (always denoted by $st_{i_2}$)
and that $i_1 < i_2$, it holds that all generator ``monomials"
$\prod_{k=0}^{g'+1} \varphi_{l_k}^{a_k}$ as above have the same
valuation $\nu_{i_2}$, that is a multiple of $\alpha_{st_{i_1}} -1$
and larger than or equal to $\alpha_{st_{i_2}}$. This shows that
$p_{i_1} = q_{i_1}$ because the classes in $\E_\alpha(F_{i_1} +
F_{i_2})/ \E_\alpha$ of the ``monomials" spanning
$\E_\alpha(F_{i_1})/ \E_\alpha$ are linearly independent elements
and one cannot find any element $\prod_{k=0}^{g'+1}
\varphi_{l_k}^{a_k}$ such that $\nu_{i_1}(\prod_{k=0}^{g'+1}
\varphi_{l_k}^{a_k})=\alpha_{st_{i_1}} -1$ and
$\nu_{i_2}(\prod_{k=0}^{g'+1} \varphi_{l_k}^{a_k})=\alpha_{st_{i_2}}
-1$.

Notice that in  the above reasoning, the specific values
$\alpha_j$ for those indices not corresponding to the divisors
defining the valuations $\nu_{i_1}$ and $\nu_{i_2}$ are not
relevant.

When $t>2$, we can reduce the proof to the above situation. Indeed,
if we consider the diagram in Figure \ref{conmutativo},
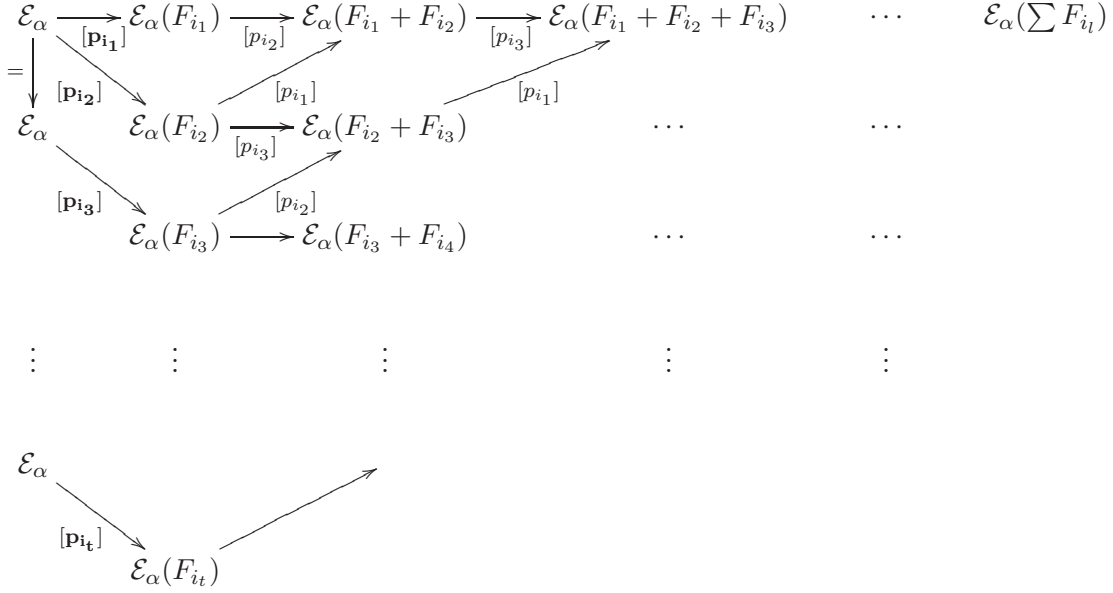
\begin{figure}
\begin{center}
  \xymatrix{ & & & & &\\ \E_\alpha \ar[d]_{=}
\ar[r]_{\bf{[p_{i_1}]}} \ar[dr]_{\bf{[p_{i_2}]}}
&\E_\alpha(F_{i_1})\ar[r]_<<<<<{[p_{i_2}]} & \E_\alpha(F_{i_1} +
F_{i_2}) \ar[r]_<<<<<{[p_{i_3}]} &
\E_\alpha(F_{i_1}+ F_{i_2}+F_{i_3})& \cdots &\E_\alpha(\sum F_{i_l})\\
\E_\alpha  \ar[dr]_{\bf{[p_{i_3}]}} & \E_\alpha(F_{i_2})
\ar[r]_<<<<{[p_{i_3}]} \ar[ur]_{[p_{i_1}]} & \E_\alpha(F_{i_2}+
F_{i_3})
 \ar[ur]_{[p_{i_1}]} & \cdots  & \cdots  & \\
 & \E_\alpha(F_{i_3}) \ar[r] \ar[ur]_{[p_{i_2}]}& \E_\alpha(F_{i_3}+ F_{i_4}) & \cdots & \cdots & \\
\vdots & \vdots & \vdots& \vdots & \vdots &\\
\E_\alpha \ar[dr]_{\bf{[p_{i_t}]}}&&&&& \\ &\E_\alpha(F_{i_t})
\ar[ur]& & & &\\}
\end{center}
\caption{Diagram for $t>2$} \label{conmutativo}
\end{figure}
we can do the above reasoning for the sub-diagrams including three
consecutive ideals of the type described for $t=2$ (changing
$D(\alpha)$ by  sums of the type $D(\alpha) + \sum_{l\in L} F_l$,
with a suitable set of indices $L$). We have set in boldface letter
the dimensions we know and without this type of letter those that we
compute in an iterative manner by using the reasoning for $t=2$.
This implies that we can compute the dimensions appearing in the
arrows of the top line of the diagram. The sum of those dimensions
is the dimension we desire to compute and it is $p_{i_1} + p_{i_2} +
\cdots + p_{i_t}$.
\end{proof}

We shall use the above lemma to prove the following one, which shows
that Theorem \ref{DOS} holds for less than 1 jumping numbers. Before
stating it, we recall that the less than 1 jumping numbers of $\wp$
(and their multiplier ideals) coincide with those associated with
the curve in ${\rm Spec}(R)$ defined by  a general element of the
valuation $\nu$, $\varphi$, \cite[Prop. 9.3]{ja}. Furthermore,
Varchenko \cite{var} (see also \cite{bu}) proved that these jumping
numbers are exactly the exponents $\alpha$ in the interval $]0,1[$
corresponding to non-zero terms of the Hodge spectrum of $\varphi$,
${\rm Sp}(\varphi)=\sum_{\alpha\in \mathbb{Q}}
n_{\alpha}(\varphi)t^{\alpha}$, which is a fractional Laurent
polynomial with integer coefficients that can be defined using the
mixed Hodge structure and the monodromy on the cohomology of the
Milnor fiber of $\varphi$. We must mention here that, although the
Hodge spectrum of a hypersurface singularity was first defined in
\cite{ste1} and \cite{ste2}, we consider the definition used in
\cite{sai2,sai,bu}.

\begin{lem}\label{asterix}
If $\iota$ is a jumping number of a simple complete ideal $\wp$ such that $0<\iota<1$
and a divisor $F_i$ ($1\leq i\leq g^*+1$) contributes $\iota$, then
$\iota \in {\mathcal H}_i$.
\end{lem}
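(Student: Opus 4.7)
The plan is a double-counting argument combining the direct implication of Theorem \ref{DOS} (already proved), Proposition \ref{CINCO}(c), Lemma \ref{ideafix}, and Varchenko's identification of the less-than-one jumping numbers with the support of the Hodge spectrum. For each jumping number $\iota$ of $\wp$ with $0<\iota<1$, set $a(\iota):=\dim_{\mathbb{C}}\left(\mathcal{J}(\wp^{\iota^<})/\mathcal{J}(\wp^\iota)\right)$, $b(\iota):=|\{i:F_i\text{ contributes }\iota\}|$ and $c(\iota):=|\{i:\iota\in\mathcal{H}_i\}|$. By Proposition \ref{CINCO}(c), the divisors contributing $\iota$ all lie in $\{F_1,\ldots,F_{g^*+1}\}$, and by the direct implication of Theorem \ref{DOS} we have $b(\iota)\geq c(\iota)$. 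Applying Lemma \ref{ideafix} with the tuple associated to $-\lfloor\iota D\rfloor+K_{X|X_0}$ and with $\{F_{i_1},\ldots,F_{i_s}\}$ taken to be the set of contributing divisors, we see that $\pi_*\mathcal{O}_X(-\lfloor\iota D\rfloor+K_{X|X_0}+\sum_{l} F_{i_l})/\mathcal{J}(\wp^\iota)$ has dimension exactly $s=b(\iota)$; since this quotient embeds in $\mathcal{J}(\wp^{\iota^<})/\mathcal{J}(\wp^\iota)$, we obtain the chain $a(\iota)\geq b(\iota)\geq c(\iota)$.

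The next step is to establish the global identity $\sum_{0<\iota<1}a(\iota)=\sum_{0<\iota<1}c(\iota)$. By J\"{a}rvilehto's Proposition 9.3, the jumping numbers of $\wp$ in $(0,1)$, together with the dimensions of the corresponding multiplier-ideal quotients, coincide with those of the curve defined by a general element $\varphi$ of $\nu$; combined with the Varchenko/Budur--Saito theorem, the left-hand sum equals $\sum_{0<\alpha<1}n_\alpha(\varphi)$, the total mass on $(0,1)$ of the Hodge spectrum ${\rm Sp}(\varphi)$. The right-hand sum is a direct lattice-point count in the $(p,q,r)$-parametrizations of the sets $\mathcal{H}_1,\ldots,\mathcal{H}_{g^*+1}$. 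The delicate point is checking that these two combinatorial quantities agree; this matching relies on the classical formula expressing ${\rm Sp}(\varphi)$ on $(0,1)$ in terms of the Zariski/Puiseux data $\bar\beta_k,e_k,n_k$ of the irreducible plane branch, which after rearrangement yields precisely the same count as the union of the $\mathcal{H}_i$.

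Once this identity is established, the inequalities $a(\iota)\geq b(\iota)\geq c(\iota)$, summed over all jumping numbers in $(0,1)$, must be equalities term by term. Hence $b(\iota)=c(\iota)$ for every such $\iota$; combined with the inclusion $\{i:\iota\in\mathcal{H}_i\}\subseteq\{i:F_i\text{ contributes }\iota\}$ provided by the direct implication of Theorem \ref{DOS}, these two finite sets must coincide. In particular, if $F_i$ contributes $\iota$ then $\iota\in\mathcal{H}_i$, which is the content of the lemma. The main obstacle in executing this plan is the spectrum-versus-lattice count; everything else is an essentially formal bookkeeping built on the tools already in place.
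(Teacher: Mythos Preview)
Your outline is correct and uses the same toolkit as the paper: the direct implication of Theorem~\ref{DOS}, Proposition~\ref{CINCO}(c), Lemma~\ref{ideafix}, Budur's identification $n_\iota(\varphi)=\dim \mathcal{J}(\wp^{\iota^<})/\mathcal{J}(\wp^\iota)$, and Saito's spectrum computation for an irreducible plane branch. The difference is organizational. The paper argues \emph{pointwise}: for each fixed $\iota\in(0,1)$ it cites \cite[1.5]{sai} for the equality $n_\iota(\varphi)=|\{k:\iota\in\mathcal{H}_k\}|$, i.e.\ $a(\iota)=c(\iota)$ in your notation; then Lemma~\ref{ideafix} together with the direct implication forces $\{k:\iota\in\mathcal{H}_k\}=\{k:F_k\text{ contributes }\iota\}$ immediately. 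Your route sets up $a\geq b\geq c$ and appeals to a global identity $\sum_{0<\iota<1}a(\iota)=\sum_{0<\iota<1}c(\iota)$. But the ``spectrum-versus-lattice'' match you flag as the main obstacle is precisely the pointwise Saito formula the paper invokes, so the summation detour buys nothing unless you replace it by a genuinely coarser input (for instance $\sum_{0<\alpha<1}n_\alpha(\varphi)=\mu/2=\delta$ together with a direct combinatorial count of $\sum_i|\mathcal{H}_i\cap(0,1)|$). That would be a mildly different argument, but you have not carried it out.

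One small correction: Lemma~\ref{ideafix} applied to the contributing divisors yields dimension $\sum_l\dim\bigl(\pi_*\mathcal{O}_X(D(\alpha)+F_{i_l})/\mathcal{J}(\wp^\iota)\bigr)\geq b(\iota)$, not ``exactly $b(\iota)$'', since a priori each summand is only known to be $\geq 1$ (equality to $1$ is established only \emph{after} this lemma, in the Remark following it). Only the inequality is needed for your chain $a\geq b\geq c$, so this does not damage the argument.
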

\begin{proof}

Assume that $F_i$ contributes $\iota$. Consider the set of indices
$\Pi:=\{k \mid 1\leq k \leq g^*+1 \mbox{ and } \iota \in {\mathcal
H}_k\}$. Taking into account that $F_k$ contributes $\iota$ for all
$k\in \Pi$ (by the direct implication of Theorem \ref{DOS}) one has
$$
{\mathcal J}
\left(\wp^{\iota} \right) \subseteq \pi_*\mathcal{O}_X \left( - \lfloor \iota D
\rfloor + K_{X|X_0} + \sum_{k \in \Pi} F_{k} \right) \subseteq {\mathcal J}
\left(\wp^{\iota^{<}} \right).
$$

By \cite{bu}, it happens that
\begin{equation}\label{ppp}
n_{\iota}(\varphi)=\dim_{\mathbb{C}} {\mathcal J}
(\wp^{\iota^{<}}) / {\mathcal J}(\wp^{\iota}),
\end{equation}
and, by \cite[1.5]{sai}, $n_{\iota}(\varphi)$ is the cardinality of
$\Pi$. Therefore, by Lemma \ref{ideafix}, the equality
$\pi_*\mathcal{O}_X ( - \lfloor \iota D \rfloor + K_{X|X_0} +
\sum_{k\in \Pi} F_{k})= {\mathcal J} (\wp^{\iota^{<}})$ holds. As a
consequence if $i$ would not be in $\Pi$, then ${\mathcal J}
(\wp^{\iota^{<}})=\pi_*\mathcal{O}_X ( - \lfloor \iota D \rfloor +
K_{X|X_0} + \sum_{k\in \Pi} F_{k}+F_i)$, which is a contradiction
with Lemma \ref{ideafix} and (\ref{ppp}).
\end{proof}

\noindent {\it Remark.} Notice that, from the above proof, it
follows that the dimension of the quotient $\pi_*\mathcal{O}_X ( -
\lfloor \iota D \rfloor + K_{X|X_0} + F_i)/ {\mathcal
J}(\wp^{\iota})$ is 1 whenever $\iota \in {\mathcal H}_i$, $1\leq i
\leq g^*+1$, and
 $0<\iota<1$. \\

Now we shall take advantage of Lemma \ref{asterix} to prove that its
statement is true for whichever jumping number $\iota$ of $\wp$.
Notice that $\iota\not=1$ by \cite[Prop. 8.9]{ja}. Consider a
divisor $F_i$, $1\leq i\leq g^*+1$, such that $F_i$ contributes
$\iota>1$ and let us prove that $\iota \in {\mathcal
H}_i$.\\

If $\iota$ is an integer then $\iota\in {\mathcal H}_{g^*+1}$ (by
\cite[Prop. 8.11]{ja}). As $\iota D \cdot F_i = \lfloor \iota D
\rfloor \cdot F_i\leq -2$ (by Proposition \ref{CINCO}), $i$ must be
$g^*+1$ since otherwise the above equality would not happen because
$D\cdot E_j=0$ whenever $E_j\not=F_{g^*+1}$.\\

If $i=g^*+1$ then $\iota \nu(\wp)$ is a positive integer and,
therefore, $\iota
\in {\mathcal H}_{g^*+1}$ (again by \cite[Prop. 8.11]{ja}).\\

Hence we can assume from now on that $\iota$ is not an integer and
$i\not=g^*+1$. Set $a:=\lfloor \iota \rfloor$ and
$\beta:=\iota-a$. We shall distinguish two cases, proving that the second one cannot hold.\\

\noindent {\it Case 1.} $\beta$ is a jumping number of $\wp$. Obviously $\beta$ is a candidate jumping number
from $F_i$. We have $\lfloor \beta D \rfloor \cdot F_i= \lfloor
\iota D \rfloor \cdot F_i- a D\cdot F_i=\lfloor \iota D \rfloor
\cdot F_i\leq -2$ (by Proposition \ref{CINCO}) and, thus,
$F_i$ contributes $\beta$. Since $0<\beta<1$ we apply Lemma
\ref{asterix} concluding that $\iota=a+\beta\in {\mathcal H}_i$.\\

\noindent {\it Case 2.} $\beta$ is not a jumping number. $F_i$
contributes $\iota$ so $-\lfloor \beta D \rfloor \cdot F_i=-\lfloor
\iota D \rfloor \cdot F_i + aD\cdot F_i=-\lfloor \iota D \rfloor
\cdot F_i \geq 2$. Now, $\pi_*\mathcal{O}_X ( - \lfloor \beta D
\rfloor + K_{X|X_0} + F_i)\not=\pi_*\mathcal{O}_X ( - \lfloor \beta
D \rfloor + K_{X|X_0})$ by Proposition \ref{CINCO} (a). Since $\beta
\nu(\varphi_i)$ is an integer number it holds that
$\pi_*\mathcal{O}_X ( - \lfloor (\beta-\epsilon) D \rfloor +
K_{X|X_0})\not= \pi_*\mathcal{O}_X ( - \lfloor \beta D \rfloor +
K_{X|X_0})$ for all $\epsilon>0$. Then $\beta$ should be
a jumping number, which is a contradiction.\\

This concludes the proof of Theorem \ref{DOS}. $\Box$

\subsection{Proof of Theorem \ref{TRES}}
Next we prove Theorem \ref{TRES}, which states that if $\iota$ is
a jumping number of a simple complete ideal $\wp$, then\[
\pi_*\mathcal{O}_X \left( - \lfloor \iota D \rfloor + K_{X|X_0} +
\sum_{l=1}^s F_{i_l} \right) = \mathcal{J}
\left(\wp^{\iota^<}\right),
\]
where $\{i_1,i_2,\ldots,i_s\}$ is the set of indexes $i$, $1 \leq i \leq g^* +1$, such that
$\iota\in \mathcal{H}_{i}$.

To begin with,  we shall prove some technical lemmas which keep the above notation. The first one
is related to Enriques' ``principle of discharge'' \cite[pag. 28]{z-3}:
\begin{lem}\label{descarga}
Let $F$ be a divisor of $X$ with exceptional support and let $E_k$
be a prime exceptional divisor such that $F\cdot E_k>0$. Then
$\pi_*\mathcal{O}_X(-F)=\pi_*\mathcal{O}_X(-F-E_k).$
\end{lem}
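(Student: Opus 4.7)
The plan is to prove the non-trivial inclusion $\pi_*\mathcal{O}_X(-F)\subseteq \pi_*\mathcal{O}_X(-F-E_k)$; the reverse inclusion is immediate from $\mathcal{O}_X(-F-E_k)\subseteq \mathcal{O}_X(-F)$. Take a section $f\in \pi_*\mathcal{O}_X(-F)$, which we may regard as an element $f\in R$ whose pullback satisfies $\mathrm{div}(\pi^*f)\geq F$ on $X$. Decompose $\mathrm{div}(\pi^*f)=\widetilde{C}+F+H$, where $\widetilde{C}$ is the strict transform on $X$ of the principal divisor $\mathrm{div}(f)$ on $X_0$, and $H$ is an effective exceptional divisor.

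The key observation I will use is that $\mathrm{div}(\pi^*f)$ is a principal divisor on $X$, so its intersection with the proper curve $E_k$ vanishes:
\[
0=\mathrm{div}(\pi^*f)\cdot E_k=\widetilde{C}\cdot E_k+F\cdot E_k+H\cdot E_k.
\]
Since $\widetilde{C}$ contains no exceptional component, $\widetilde{C}\cdot E_k\geq 0$, and by hypothesis $F\cdot E_k>0$. Hence $H\cdot E_k\leq -F\cdot E_k<0$.

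To conclude I will argue that $E_k$ must appear in $H$ with positive multiplicity. Writing $H=\sum_j h_jE_j$ with $h_j\geq 0$, the fact that $E_j\cdot E_k\geq 0$ for every $j\neq k$ (distinct exceptional divisors of a sequence of blowing-ups at smooth points meet transversally or are disjoint) implies that if $h_k=0$ then $H\cdot E_k=\sum_{j\neq k}h_jE_j\cdot E_k\geq 0$, contradicting the previous inequality. Therefore $h_k\geq 1$, so $H\geq E_k$ and consequently $\mathrm{div}(\pi^*f)\geq F+E_k$, which gives $f\in \pi_*\mathcal{O}_X(-F-E_k)$.

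There is no real obstacle here; the only subtle point is justifying that a principal divisor intersects each exceptional divisor in degree zero (which rests on $E_k$ being complete over the closed point of $X_0$) and that the strict transform has non-negative intersection with $E_k$. Both are standard facts for birational morphisms obtained by composing point blowing-ups on a smooth surface, so the proof reduces to the short numerical computation above.
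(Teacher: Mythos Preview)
Your argument is correct. The only phrase that could be tightened is ``we may regard as an element $f\in R$'': this is in fact true here, since $F$ has purely exceptional support and $\mathrm{div}_X(f)\geq F$ forces the non-exceptional part of $\mathrm{div}_X(f)$ to be effective, whence $\mathrm{div}_{X_0}(f)\geq 0$ and $f\in R$ by normality of $R$; but it is worth saying why, since the lemma does not assume $F$ effective.

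Your route, however, differs from the paper's. The paper argues cohomologically: from the short exact sequence
\[
0\longrightarrow \mathcal{O}_X(-F-E_k)\longrightarrow \mathcal{O}_X(-F)\longrightarrow \mathcal{O}_X(-F)\otimes\mathcal{O}_{E_k}\longrightarrow 0
\]
one takes global sections and uses that $\mathcal{O}_X(-F)\otimes\mathcal{O}_{E_k}\cong\mathcal{O}_{E_k}(-F\cdot E_k)$ is a line bundle of negative degree on $E_k\cong\mathbb{P}^1$, hence has no global sections. Your proof is instead the classical divisor-theoretic version of Enriques' discharge: you use directly that a principal divisor has degree zero on the complete curve $E_k$ and then a short intersection computation to force $E_k$ into the effective exceptional remainder $H$. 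Both arguments are standard and ultimately rest on the same underlying fact (negative-degree line bundles on $\mathbb{P}^1$ have no sections), but the paper's formulation is slightly slicker and avoids decomposing $\mathrm{div}(\pi^*f)$, while yours is more elementary and makes the numerical mechanism explicit.
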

\begin{proof}
It suffices to take global sections on the natural
exact sequence
$$0\rightarrow \mathcal{O}_X(-F-E_k)\rightarrow \mathcal{O}_X(-F)\rightarrow \mathcal{O}_X(-F)\otimes \mathcal{O}_{E_k}.$$
\end{proof}

Our next lemma follows easily from the fact that the equality
$D\cdot E_{k}=0$ happens whenever $k\not=n$.

\begin{lem}\label{a}
Let $E_k$ be a prime exceptional divisor such that $\iota$ is a
candidate jumping number from $E_k$. If $E_k\not=F_i$ for all $i\in
\{1,2,\ldots,g^*+1\}$, then either $\iota$ is a candidate jumping
number from all prime exceptional divisors meeting $E_k$ or $\iota$
is not a candidate jumping number from none of them. $\Box$

\end{lem}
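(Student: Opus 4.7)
The plan is to exploit the assumption ``$E_k \neq F_i$ for every $i \in \{1,\ldots,g^{*}+1\}$'' along two complementary fronts. First, since $E_k \neq F_{g^{*}+1} = E_n$, we have $k < n$, so, as recalled by the author, $D \cdot E_k = 0$. Second, since $E_k$ does not correspond to any star vertex of $\Gamma$, the vertex associated with $E_k$ has degree at most two in the dual graph, i.e., $E_k$ meets at most two other prime exceptional divisors. The hypothesis also forbids $E_k$ from being a degree-zero vertex (that would only happen if $n=1$, in which case $E_1 = E_n = F_{g^{*}+1}$).

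Expanding $D \cdot E_k = 0$ via $E_k \cdot E_k = -1 - \card(\mathcal{P}_k)$ and $E_j \cdot E_k \in \{0,1\}$ for $j \neq k$ yields
\[
\sum_{\substack{j \neq k \\ E_j \cap E_k \neq \emptyset}} a_j \;=\; (1 + \card(\mathcal{P}_k))\, a_k.
\]
Multiplying through by $\iota$ and using that $\iota a_k \in \Z$ by hypothesis, the right-hand side becomes an integer, hence
\[
\sum_{\substack{j \neq k \\ E_j \cap E_k \neq \emptyset}} \iota a_j \in \Z.
\]

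From here the argument splits according to how many neighbours $E_k$ has. If $E_k$ is a dead vertex, the sum collapses to $\iota a_j$ for the unique neighbour $E_j$, so $\iota a_j \in \Z$ and we land in the ``all'' side of the dichotomy. If $E_k$ has exactly two neighbours $E_{j_1}$ and $E_{j_2}$, then $\iota a_{j_1} + \iota a_{j_2} \in \Z$ forces $\iota a_{j_1}$ and $\iota a_{j_2}$ to be simultaneously integers or simultaneously non-integers, which is precisely the dichotomy asserted.

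No substantial obstacle is anticipated: the real content is realising that the hypothesis supplies exactly two pieces of information, namely the numerical identity coming from $k \neq n$ and the degree bound coming from non-stardom. Once both ingredients are in place, the claim reduces to the one-line observation that an integer sum of two reals has a fractional summand if and only if both summands are fractional.
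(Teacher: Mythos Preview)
Your proof is correct and follows exactly the approach the paper indicates: the paper's own ``proof'' is the single sentence that the lemma ``follows easily from the fact that the equality $D\cdot E_k=0$ happens whenever $k\neq n$,'' and you have simply (and accurately) spelled out the two consequences of the hypothesis---the identity $D\cdot E_k=0$ from $E_k\neq E_n$ and the degree bound in $\Gamma$ from $E_k$ not being a star vertex---and carried through the elementary arithmetic.
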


\begin{lem}\label{c}
Let $i\in \{1,2,\ldots,g^*\}$ and let $F'_{i-2}, F'_{i-1}$ and
$F'_{i+1}$ be the three prime exceptional divisors meeting $F_i$,
whose corresponding vertices in $\Gamma$ are depicted in Figure
\ref{fig2}. If $\iota$ is a candidate jumping number from $F_i$ and
$F'_{i+1}$, then $\iota$ is also so from $F'_{i-2}$ and
$F'_{i-1}$.

\end{lem}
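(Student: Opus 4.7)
The plan is to combine the intersection equality $D\cdot F_i=0$ (valid since $F_i=E_{st_i}\neq E_n$ for $i\leq g^*$) with divisibility properties of the coefficients $a_j=\nu(\varphi_j)$ of $D$.

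Writing $F_i^2=-1-k_i$, where $k_i$ denotes the number of prime exceptional divisors proximate to $F_i$, and expanding $D\cdot F_i=0$ using the three graph neighbors $F'_{i-2},F'_{i-1},F'_{i+1}$ of $F_i$, one gets
\[
(1+k_i)\,a_{F_i}=a_{F'_{i-2}}+a_{F'_{i-1}}+a_{F'_{i+1}}.
\]
Under the hypothesis $\iota a_{F_i},\iota a_{F'_{i+1}}\in\mathbb{Z}$, this already yields $\iota(a_{F'_{i-2}}+a_{F'_{i-1}})\in\mathbb{Z}$; the remaining task is to separate the two summands.

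To do so, I would invoke two structural inputs from \cite{spi}. First, I claim that $\gcd(a_{F_i},a_{F'_{i+1}})=e_i$, where $e_i=\gcd(\bar{\beta}_0,\ldots,\bar{\beta}_i)$. This rests on the identities $a_{F_i}=e_{i-1}\bar{\beta}_i$ and $a_{F'_{i+1}}=a_{F_i}+e_i$; the latter holds because the blow-up creating $F'_{i+1}$ is at a point of $F_i$ lying off every other exceptional divisor (as $F'_{i+1}$ is free), so that the coefficient of $F'_{i+1}$ in $\pi^*\varphi$, for $\varphi$ a general element of $\nu$, equals $a_{F_i}$ plus the residual multiplicity $e_i$ of $\widetilde{\varphi}$ at that center. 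Then $\gcd(e_{i-1}\bar{\beta}_i,e_{i-1}\bar{\beta}_i+e_i)=\gcd(e_{i-1}\bar{\beta}_i,e_i)=e_i$, using that $e_i\mid\bar{\beta}_i$. Second, both $a_{F'_{i-1}}$ and $a_{F'_{i-2}}$ lie in $e_i\mathbb{Z}$, because $F'_{i-1}$ and $F'_{i-2}$ are divisors created at or before the $i$-th Puiseux step, so their $\nu$-values are $\mathbb{Z}_{\geq 0}$-combinations of $\bar{\beta}_0,\ldots,\bar{\beta}_i$, each of which is divisible by $e_i$.

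With these inputs the conclusion is immediate. Write $\iota=p/q$ in lowest terms; the hypothesis forces $q\mid a_{F_i}$ and $q\mid a_{F'_{i+1}}$, hence $q\mid e_i$ by the first input. Since $e_i\mid a_{F'_{i-1}}$ and $e_i\mid a_{F'_{i-2}}$ by the second, we obtain $q\mid a_{F'_{i-1}}$ and $q\mid a_{F'_{i-2}}$, so $\iota a_{F'_{i-1}},\iota a_{F'_{i-2}}\in\mathbb{Z}$, as required.

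The main obstacle will be the careful verification of the identity $a_{F'_{i+1}}=a_{F_i}+e_i$ together with the analogous formula in the case where $F'_{i+1}$ turns out to be satellite rather than free. This is a Bezout-type identity in the spirit of Lemma \ref{SEIS}, requiring bookkeeping of Puiseux invariants of the auxiliary valuation attached to $F'_{i+1}$ and of the Euclidean-algorithmic structure governing the blow-up sequence leading from $F_i$ into $\Gamma_{i+1}$.
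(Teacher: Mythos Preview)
Your approach is correct and follows essentially the same route as the paper's: reduce via $D\cdot F_i=0$, extract $\iota e_i\in\mathbb{Z}$ from the relation between $a_{F_i}$ and $a_{F'_{i+1}}$, and finish using $e_i\mid\bar\beta_i$. Two points of comparison are worth noting.

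First, the ``obstacle'' you flag disappears once you use the uniform identity the paper invokes (already implicit in the computations of Theorem~\ref{DOS}):
\[
\nu(\varphi_1)=t\,\nu(\varphi)+e_i,
\]
where $t$ is the number of prime exceptional divisors proximate to $F_i$. This holds whether $F'_{i+1}$ is free ($t=1$) or satellite ($t>1$); your formula $a_{F'_{i+1}}=a_{F_i}+e_i$ is the special case $t=1$. With the general identity, $\iota e_i=\iota\,\nu(\varphi_1)-t\,\iota\,\nu(\varphi)\in\mathbb{Z}$ follows immediately, and there is no need for a free/satellite case split or the $\gcd$ detour.

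Second, the paper economizes on your ``second input'': rather than arguing directly that both $a_{F'_{i-1}}$ and $a_{F'_{i-2}}$ lie in $e_i\mathbb{Z}$, it uses the explicit formula $\nu(\varphi_{-1})=e_{i-1}^{\varphi_{-1}}\bar\beta_i$ together with $e_i\mid\bar\beta_i$ to handle $F'_{i-1}$, and then obtains $F'_{i-2}$ for free from the relation $D\cdot F_i=0$ (since three of the four terms are then known to have integral $\iota$-multiples). Your direct divisibility claim for $a_{F'_{i-2}}$ is also true, but the paper's route avoids having to justify it.
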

\begin{proof}
Notice that, due to the equality $D\cdot F_i=0$ and the fact that
$\iota$ is a candidate jumping number from $F_i$ and $F'_{i+1}$, it
is enough to prove that $\iota$ is a candidate jumping number from
$F'_{i-1}$. With the same notation of the proof of Theorem
\ref{DOS}, we notice that $\nu(\varphi_1)= t \nu(\varphi)+ e_{i}$,
where $t$ is the number of proximate to $F_{i}$ prime exceptional
divisors. Therefore $\iota e_i$ is a positive integer because
$\iota$ is a candidate jumping number from $F_i$ and  $F'_{i+1}$.
Since $\nu(\varphi_{-1})=e_{i-1}^{\varphi_{-1}} \bar{\beta}_i$ and
$e_i$ divides $\bar{\beta}_i$ one has that $\iota \nu(\varphi_{-1})$
is also a positive integer and, hence, $\iota$ is a candidate
jumping number from $F'_{i-1}$.
\end{proof}

From now on, denote by $\Delta$ the set of prime exceptional
divisors (provided by the sequence $\pi$ of (\ref{spip})) from which $\iota$ is a candidate jumping number.

\begin{lem}\label{b}
Let $E_k$ be a divisor in $\Delta$ such that $E_k\not= F_i$ for all
$i\in \{1,2,\ldots,g^*+1\}$ and consider a subset $S$ of $\Delta$
such that $E_k\in S$ and the cardinality of the set ${\mathcal S}_k := \{E_j\in S\mid
E_j\cap E_k\not=\emptyset\}$ is less than or equal to 1.  Then
$$\pi_* {\mathcal O}_X \left(-\lfloor \iota D \rfloor +
K_{X|X_0}+\sum_{E_j\in S} E_j \right)=\pi_* {\mathcal O}_X
\left(-\lfloor \iota D \rfloor + K_{X|X_0}+\sum_{E_j\in S\setminus
\{E_k\}} E_j \right).$$

\end{lem}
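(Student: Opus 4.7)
The plan is to reduce Lemma~\ref{b} to a single intersection-number inequality on $X$ and then invoke the principle of discharge (Lemma~\ref{descarga}). Writing $G := -\lfloor \iota D \rfloor + K_{X|X_0} + \sum_{E_j \in S \setminus \{E_k\}} E_j$, the statement to prove is $\pi_*\mathcal{O}_X(G+E_k) = \pi_*\mathcal{O}_X(G)$, and since the divisor $-G-E_k$ has exceptional support, Lemma~\ref{descarga} reduces the whole task to checking the numerical condition $(G+E_k)\cdot E_k < 0$.

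First I would unpack this intersection number. Adjunction on the smooth rational curve $E_k$, together with the formula $E_k^2 = -1-|\mathcal{P}_k|$ already recorded in Section~\ref{results}, yields $K_{X|X_0}\cdot E_k = |\mathcal{P}_k|-1$. Substituting these values, the $|\mathcal{P}_k|$ contributions cancel and the inequality $(G+E_k)\cdot E_k < 0$ collapses to the cleaner condition
\[
-\lfloor \iota D \rfloor \cdot E_k \;\leq\; 1-|\mathcal{S}_k|.
\]

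Next I would exploit the hypothesis that $E_k$ is not any $F_i$. By Proposition~\ref{CINCO}(c) such an $E_k$ cannot contribute any jumping number; combining this with $E_k\in\Delta$ and Proposition~\ref{CINCO}(b) immediately gives the general bound $-\lfloor \iota D \rfloor \cdot E_k \leq 1$. This already settles the subcase $|\mathcal{S}_k|=0$.

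The remaining case $|\mathcal{S}_k|=1$ is where the real delicacy lies: the target inequality now demands the sharper equality $-\lfloor \iota D \rfloor \cdot E_k = 0$. To obtain it, I would use $D \cdot E_k = 0$ (a consequence of $E_k \neq F_{g^*+1}=E_n$) together with $\iota a_k \in \mathbb{Z}$ (the condition $E_k \in \Delta$) to rewrite $-\lfloor \iota D \rfloor\cdot E_k$ as the sum of the fractional parts of $\iota a_j$ taken over the prime exceptional divisors $E_j$ adjacent to $E_k$. Since $|\mathcal{S}_k|=1$ places at least one such neighbor inside $S\subseteq \Delta$, the dichotomy in Lemma~\ref{a} forces \emph{every} neighbor of $E_k$ to lie in $\Delta$, so each fractional part vanishes and the sum is zero. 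This last step, in which Lemma~\ref{a} rules out the otherwise troublesome configuration of a single neighbor in $S$ coexisting with nontrivial fractional parts at the remaining neighbors of $E_k$, is the main subtlety of the whole argument.
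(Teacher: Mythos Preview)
Your proposal is correct and follows essentially the same route as the paper: both reduce via Lemma~\ref{descarga} to the intersection-number inequality, use adjunction to simplify it to $-\lfloor \iota D\rfloor\cdot E_k \le 1-|\mathcal S_k|$, handle the case $|\mathcal S_k|=0$ through Proposition~\ref{CINCO}, and in the case $|\mathcal S_k|=1$ apply Lemma~\ref{a} together with $D\cdot E_k=0$ to obtain $\lfloor \iota D\rfloor\cdot E_k=\iota D\cdot E_k=0$. Your fractional-part rewriting in the last step is just a slightly more explicit phrasing of the paper's one-line observation that all relevant $\iota a_j$ are integers.
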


\begin{proof}
Set $G$ the divisor $\lfloor \iota D \rfloor -
K_{X|X_0}-\sum_{E_j\in S} E_j$. By Lemma \ref{descarga} it
suffices to prove that $G\cdot E_k>0$. It is clear that $G\cdot
E_k= \lfloor \iota D \rfloor\cdot E_k-K_{X|X_0}\cdot
E_k-E_k^2-\epsilon$, where $\epsilon=0$ ($\epsilon=1$,
respectively) whenever the cardinality of the set ${\mathcal S}_k
$ equals 0 (1, respectively). Taking into account that
$K_{X|X_0}\cdot E_k=-E_k^2-2$, we get
$$G\cdot E_k=\lfloor \iota D \rfloor\cdot E_k+2-\epsilon.$$ When
${\mathcal S}_k $ is empty, the condition
$G\cdot E_k>0$ is equivalent to
$\lfloor \iota D \rfloor\cdot E_k\geq -1$, which is true by
Proposition \ref{CINCO}. Otherwise we must prove that
$\lfloor \iota D \rfloor\cdot E_k\geq 0$. Indeed, by Lemma \ref{a},
$\iota$ is a candidate jumping number from all prime exceptional divisors meeting
$E_k$ and, then, $\lfloor \iota D \rfloor\cdot E_k=\iota D\cdot
E_k=0$.
\end{proof}

Given two prime exceptional divisors $E_k$ and $E_j$, we shall denote by $[E_k,E_j]$
($[E_k,E_j[$, respectively) ($]E_k,E_j[$, respectively) the set of
prime exceptional divisors corresponding to the vertices of the
dual graph $\Gamma$ of $\wp$ which are on the shortest path that
joins the vertices associated with $E_k$ and $E_j$
(the set $[E_k,E_j]\setminus \{E_j\}$, respectively) (the set $]E_k,E_j[\setminus
\{E_k,E_j\}$, respectively). The {\it length} of $[E_k,E_j]$ will
be the length of the mentioned path (that is, its number of
edges).

\begin{lem}\label{d}
Let $S$ be a subset of $\Delta$ and  $E_k$  a  prime exceptional
divisor given by $\pi$ that is different from $ F_i$ for all $i\in
\{1,2,\ldots,g^*+1\}$. Assume that either $E_k$ does not belong to
$S$, or $E_k$ is in $S$ and either the associated with $E_k$ vertex
in the dual graph $\Gamma$ of $\pi$ is a dead one or it is adjacent
to a vertex whose associated divisor does not belong to $S$.
Consider any divisor $F_r$  such that there is no divisor $F_i$
satisfying $F_i\in ]E_k,F_r[$. Then
$$\pi_* {\mathcal O}_X \left(-\lfloor \iota D \rfloor +
K_{X|X_0}+\sum_{E_j\in S} E_j \right)=\pi_* {\mathcal O}_X
\left(-\lfloor \iota D \rfloor + K_{X|X_0}+\sum_{E_j\in S\setminus
[E_k,F_r[} E_j \right).$$
\end{lem}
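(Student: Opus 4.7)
The natural approach is to traverse the shortest path from $E_k$ to $F_r$ in the dual graph $\Gamma$ and to strip off, one at a time, those of its vertices that happen to belong to $S$, invoking Lemma \ref{b} at each step. Write $E_k = E_{k_0}, E_{k_1}, \ldots, E_{k_{t-1}}, E_{k_t} = F_r$ for the consecutive vertices of this path. Since $E_k \neq F_i$ for all $i$ and no $F_i$ lies in $]E_k, F_r[$, none of $E_{k_0}, \ldots, E_{k_{t-1}}$ is a star vertex of $\Gamma$, and so each of them has at most two neighbors there; for $1 \leq j \leq t-1$ these two neighbors are exactly $E_{k_{j-1}}$ and $E_{k_{j+1}}$.

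Set $S_0 := S$ and define recursively $S_{j+1} := S_j \setminus \{E_{k_j}\}$ if $E_{k_j} \in S_j$, and $S_{j+1} := S_j$ otherwise, for $j = 0, 1, \ldots, t-1$. Only vertices in $[E_k, F_r[$ can ever be removed, so $S_t = S \setminus [E_k, F_r[$. The claim reduces to showing, for every such $j$,
\[
\pi_*\mathcal{O}_X\!\left(-\lfloor \iota D \rfloor + K_{X|X_0} + \sum_{E_l \in S_j} E_l\right) = \pi_*\mathcal{O}_X\!\left(-\lfloor \iota D \rfloor + K_{X|X_0} + \sum_{E_l \in S_{j+1}} E_l\right).
\]

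When $E_{k_j} \notin S_j$ there is nothing to prove. When $E_{k_j} \in S_j$, apply Lemma \ref{b} with the divisor $E_{k_j}$ and the set $S_j$: the inclusion $E_{k_j} \in S_j \subseteq S \subseteq \Delta$ and the condition $E_{k_j} \neq F_i$ are immediate, so only the bound $|\mathcal{S}_{k_j}| \leq 1$ needs to be checked. For $j = 0$ this is precisely what the hypothesis on $E_k$ supplies: if $E_k$ is a dead vertex it has at most one neighbor in $\Gamma$, while otherwise $E_k$ has at most two neighbors, one of which lies outside $S$ by assumption, so at most one is in $S_0$. For $j \geq 1$ the two neighbors of $E_{k_j}$ in $\Gamma$ are $E_{k_{j-1}}$ and $E_{k_{j+1}}$; by the recursive construction $E_{k_{j-1}}$ is not in $S_j$ (either it was never in $S$, or it was stripped at step $j-1$), hence $\mathcal{S}_{k_j} \subseteq \{E_{k_{j+1}}\}$.

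The only delicate point is the orientation of the sweep: performing the elimination from $E_k$ toward $F_r$ guarantees that at each stage the immediate predecessor on the path has already been cleared from the current set, preserving the adjacency bound required by Lemma \ref{b}. Once this ordering is fixed, the proof is a routine telescoping composition of applications of that lemma.
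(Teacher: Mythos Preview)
Your proof is correct and follows exactly the approach the paper sketches: induction on the length of $[E_k,F_r]$, peeling off one vertex at a time via Lemma~\ref{b}. The paper's own proof is the single sentence ``It follows by applying Lemma~\ref{b} and making induction on the length of $[E_k,F_r]$,'' and your argument is a faithful unpacking of that sentence.
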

\begin{proof}
It follows by applying Lemma \ref{b} and making induction on the
length of $[E_k,F_r]$.

\end{proof}

We conclude this subsection  proving Theorem \ref{TRES} with the help of the above
lemmas. It is clear that
$$\mathcal{J} \left(\wp^{\iota^<}\right)=\pi_*\mathcal{O}_X \left( - \lfloor \iota D \rfloor + K_{X|X_0} +
\sum_{E_i\in \Delta} E_i \right).$$ For each $i=1,2,\ldots,g^*+1$,
define $\Delta'_i$ as the set of divisors in $\Delta$ associated
with some vertex of the the subgraph $\Gamma_i$ of  $\Gamma$ such
that they do not contribute  $\iota$. Then, by Theorem \ref{DOS},
\begin{equation}\label{deltaprima}
\Delta\setminus \bigcup_{i=1}^{g^*+1} \Delta'_i=\{F_{i_1},
F_{i_2},\ldots,F_{i_s}\}.
\end{equation}

To prove Theorem \ref{TRES}, it will be enough to show that
\begin{equation}\label{www}
\mathcal{J} \left(\wp^{\iota^<}\right) = \pi_*\mathcal{O}_X \left(
- \lfloor \iota D \rfloor + K_{X|X_0} + \sum_{E_j\in
\Delta\setminus S_i} E_j \right)
\end{equation}
for any $i\in \{1,2,\ldots, g^*+1\}$, where $S_i:=\bigcup_{k=1}^{i}
\Delta'_k$. We shall assume that $g^*>0$ and we shall apply induction on $i$ (the result for $g^*=0$ can
be easily proved using reasonings of the forthcoming induction
procedure).

Let us show (\ref{www}) for $i=1$. Set $E_1$ and $E_t$  the
divisors corresponding to the two dead vertices of the subgraph
$\Gamma_1$. Applying Lemma \ref{d} with $E_k=E_1$ and $S=\Delta$,
it happens
\[
\mathcal{J} \left(\wp^{\iota^<}\right) = \pi_*\mathcal{O}_X \left(
- \lfloor \iota D \rfloor + K_{X|X_0} + \sum_{E_j\in
\Delta\setminus [E_1,F_1[} E_j \right).
\]
Again by Lemma \ref{d} but taking $E_k=E_t$ and $S=\Delta\setminus
[E_1,F_1[$, we obtain
\[
\mathcal{J} \left(\wp^{\iota^<}\right) = \pi_*\mathcal{O}_X \left(
- \lfloor \iota D \rfloor + K_{X|X_0} + \sum_{E_j\in
\Delta\setminus (S_{1}\setminus \{F_{1}\})} E_j \right).
\]

If either $F_1\not\in \Delta$ or $F_1$ contributes $\iota$, then
(\ref{www}) holds for $i=1$ because $S_1\subseteq [E_1,F_1[\cup
[E_t,F_1[$.

If $F_1\in \Delta_1'$ and $F_2'\in \Delta$, using Lemma \ref{c} we
get
$$\left(
\lfloor \iota D \rfloor - K_{X|X_0} - \sum_{E_j\in \Delta\setminus
(S_{1}\setminus \{F_{1}\})} E_j \right)\cdot F_1=\iota D \cdot F_1 -
K_{X|X_0} \cdot F_1- F_1^2-1>0$$ taking into account that
$K_{X|X_0}\cdot F_1=-F_1^2-2$ and  $D\cdot F_1=0$. If, otherwise,
$F_1\in \Delta_1'$ and $F_2'\not\in \Delta$, the fact that the left
hand side of the above equality is also positive follows easily from
Proposition \ref{CINCO}. Thus,
applying Lemma \ref{descarga} we conclude the proof of Equality (\ref{www}) for $i=1$.\\

Assume now that (\ref{www}) is true for $i$, $1\leq i\leq g^*$, and
let us show it for $i+1$. With the notation of Lemma \ref{c}, either
whether
 $\iota$ is a candidate jumping number from the divisors
$F_{i}$ and $F'_{i+1}$ (in which case Lemma \ref{c} shows that
$\lfloor \iota D \rfloor \cdot F_{i-1}= \iota D\cdot F_{i}=0$ what
implies that $F_{i}$ does not contribute $\iota$ (by Proposition
\ref{CINCO}) and, therefore, $F_{i}\not\in \Delta\setminus S_{i}$),
or whether $\iota$ is not a candidate jumping number either from
$F'_{i+1}$ or $F_{i}$, one can apply the induction hypothesis and
Lemma \ref{d} (with $E_k=F'_{i+1}$ and $S=\Delta\setminus  S_i$)
getting the equality
\begin{equation}\label{bbb}
\mathcal{J} \left(\wp^{\iota^<}\right) = \pi_*\mathcal{O}_X \left( -
\lfloor \iota D \rfloor + K_{X|X_0} + \sum_{E_j\in \Delta\setminus
(S_i\cup[F'_{i+1},F_{i+1}[)} E_j \right).
\end{equation}
In the cases $i<g^*$ or $i=g^* =g-1$ we consider the divisor $E_q$
associated with the dead vertex of $\Gamma$ in the subgraph
$\Gamma_{i+1}$. Applying again Lemma \ref{d}, taking $E_k=E_q$ and
$S=\Delta\setminus (S_i\cup[F'_{i+1},F_{i+1}[)$, we have
\begin{equation}\label{bbb2}
\mathcal{J} \left(\wp^{\iota^<}\right) = \pi_*\mathcal{O}_X \left(
- \lfloor \iota D \rfloor + K_{X|X_0} + \sum E_j \right),
\end{equation}
where $E_j$ runs over the set $\Delta\setminus (S_{i+1}\setminus
\{F_{i+1}\})$.

As a consequence, Equality (\ref{www}) for $i+1$ happens whenever $F_{i+1}\not\in S_{i+1}$.
So we can assume that
$F_{i+1}\in S_{i+1}$ (that is, $\iota$ is a candidate jumping number from $F_{i+1}$
and $F_{i+1}$ does not contribute $\iota$). To conclude our proof we study the three, a priori, existing possibilities.\\

\noindent {\it Case a}. $F_{i+1}$ meets a divisor in
$\Delta\setminus S_{i+1}$ associated with a vertex of the graph
$\Gamma_i\cup \Gamma_{i+1}$. This case cannot happen because then the divisor $F_i$ must contribute $\iota$ and meet
$F_{i+1}$; but then $F'_{i-2}, F'_{i-1}\in \Delta$ (by Lemma
\ref{c}) and this implies that $\lfloor \iota D \rfloor \cdot F_i =D
\cdot F_i=0$, which is a contradiction by Proposition
\ref{CINCO}. \\

\noindent {\it Case b}. $F_{i+1}$ meets a divisor in
$\Delta\setminus S_{i+1}$ associated with a vertex of the graph
$\Gamma_{i+2}$ (this cannot happen if $i=g^*$; so we assume here
that $i<g^*$). Applying Lemma \ref{c}, every divisor meeting
$F_{i+1}$ belongs to $\Delta$ and, therefore, $\lfloor \iota D
\rfloor \cdot F_{i+1}= \iota D\cdot F_{i+1}=0$. Hence
$$\left( \lfloor \iota D \rfloor -K_{X|X_0} -\sum_{E_j\in \Delta\setminus (S_{i+1}\setminus
\{F_{i+1}\}) } E_j \right)\cdot F_{i+1}=1$$ since $K_{X|X_0}\cdot
F_{i+1}=-F_{i+1}^2-2$. Thus, in this case,  by  Lemma
\ref{descarga},
(\ref{www}) holds for $i+1$.\\

\noindent {\it Case c}. $F_{i+1}$ does not meet any divisor in
$\Delta\setminus (S_{i+1}\setminus \{F_{i+1}\})$. Then
$$\left(\lfloor \iota D \rfloor -K_{X|X_0} -\sum_{E_j\in \Delta\setminus (S_{i+1}\setminus
\{F_{i+1}\}) } E_j \right)\cdot F_{i+1}=(\lfloor \iota D \rfloor
-K_{X|X_0}-F_{i+1})\cdot F_{i+1}=$$
$$= \lfloor \iota D \rfloor \cdot F_{i+1}+2>0,  $$
where the inequality holds since, by Proposition \ref{CINCO},
$\lfloor \iota D \rfloor \cdot F_{i+1}\geq -1$. This finishes the
proof after applying Lemma \ref{descarga}. $\Box$

\subsection{Proof of Theorem \ref{UNO}}
We end this paper by proving our main result, Theorem~\ref{UNO},
that provides an explicit expression for the series $P_{\wp} (t)$.

For a start we state the following result, proved in \cite[Lem.
4]{d-g-n-2} in a more general framework, that will be useful.

\begin{theo}
\label{anafel} Let $V = \{ \nu_1, \nu_2, \ldots, \nu_s\}$ be a
finite family of divisorial valuations of $K$ centered at $R$. Set
$S_V :=\left\{ (\nu_1(h), \nu_2(h), \ldots, \nu_s(h)) \in
\mathbb{Z}^s| h \in R \setminus \{0\} \right\}$, $\mathbb{Z}$
denoting the integer numbers and $$B^l := (\nu_1(\psi_l),
\nu_2(\psi_l), \ldots, \nu_s(\psi_l))= (B_1^l,B_2^l,
\ldots,B_s^l),$$ where $1 \leq l \leq s$ and $\psi_l$ is a general
element for $\nu_l$. Then the following statements hold:
\begin{enumerate}
    \item Suppose $s\geq 2$,  fix and index $l$ and consider another one  $1 \leq k \leq
    s$, $k \neq l$; if $\alpha:= (\alpha_1, \alpha_2,\ldots,
    \alpha_s) \in S_V$, then
    \[d_k (\alpha):= \dim_{\mathbb{C}} \frac{\pi_* \mathcal{O}_X (- \sum_{j=1}^s \alpha_j
E_j)}{\pi_* \mathcal{O}_X (- \sum_{j=1}^s \alpha_j E_j- E_k)} \; =
\; \dim_{\mathbb{C}} \frac{\pi_* \mathcal{O}_X (- \sum_{j=1}^s
(\alpha_j+ B^l_j) E_j)}{\pi_* \mathcal{O}_X (- \sum_{j=1}^s
(\alpha_j+ B^l_j) E_j- E_k)}.
    \]
    \item Assume that  $d_l (\alpha) \neq 0$ for some index $l$, then
\[ \dim_{\mathbb{C}} \frac{\pi_* \mathcal{O}_X (- \sum_{j=1}^s (\alpha_j +
B^l_j)E_j)}{\pi_* \mathcal{O}_X (- \sum_{j=1}^s (\alpha_j  + B^l_j)
E_j- E_l)} = 1 \; + \;  \dim_{\mathbb{C}} \frac{\pi_* \mathcal{O}_X
(- \sum_{j=1}^s \alpha_j E_j)}{\pi_* \mathcal{O}_X (- \sum_{j=1}^s
\alpha_j E_j- E_l)}. \;\;\;\Box\]

\end{enumerate}
\end{theo}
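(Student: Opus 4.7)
The plan is to analyze the $\mathbb{C}$-linear map induced by multiplication by the general element $\psi_l$ between the two vector-space quotients appearing in each statement. For $\beta \in \mathbb{Z}^s$ set $I(\beta) := \pi_*\mathcal{O}_X(-\sum_{j=1}^s \beta_j E_j) = \{h \in R : \nu_j(h) \geq \beta_j, \; j = 1, \ldots, s\}$, and let $\delta_k \in \mathbb{Z}^s$ denote the $k$-th standard basis vector, so that every quotient in the theorem takes the form $I(\beta)/I(\beta + \delta_k)$. Because $\nu_j(\psi_l h) = B^l_j + \nu_j(h)$ for every index $j$, multiplication by $\psi_l$ sends $I(\alpha)$ into $I(\alpha + B^l)$ and $I(\alpha + \delta_k)$ into $I(\alpha + B^l + \delta_k)$, so it induces a well-defined linear map
\[
\phi_l^{(k)} : I(\alpha)/I(\alpha + \delta_k) \longrightarrow I(\alpha + B^l)/I(\alpha + B^l + \delta_k).
\]
Injectivity of $\phi_l^{(k)}$ holds for every $k$: if $\nu_k(\psi_l h) \geq \alpha_k + B^l_k + 1$, subtracting $\nu_k(\psi_l) = B^l_k$ yields $\nu_k(h) \geq \alpha_k + 1$, so the class of $h$ vanishes.

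Statement (1) reduces to surjectivity of $\phi_l^{(k)}$ when $k \neq l$. The plan is, given $h' \in I(\alpha + B^l)$, to construct $h \in I(\alpha)$ with $\psi_l h - h' \in I(\alpha + B^l + \delta_k)$. The essential input is the genericity of $\psi_l$: its strict transform is smooth and meets the last blown-up divisor for $\nu_l$ transversely at a point lying outside every other exceptional component, so $\psi_l$ is a local parameter for that divisor and the obstruction to dividing $h'$ by $\psi_l$ is confined to the $\nu_l$-direction. Since the target quotient only detects the $\nu_k$-valuation with $k \neq l$, this obstruction becomes invisible after passing to $I(\alpha + B^l)/I(\alpha + B^l + \delta_k)$, and the required $h$ can be produced.

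Statement (2) is the case $k = l$, in which $\phi_l^{(l)}$ is injective but fails surjectivity by exactly one dimension. The plan is to single out one class of $I(\alpha + B^l)/I(\alpha + B^l + \delta_l)$ outside the image of $\phi_l^{(l)}$ and to show every other class is, modulo the image, a multiple of this distinguished one. The hypothesis $d_l(\alpha) \neq 0$ provides $h_0 \in I(\alpha)$ with $\nu_l(h_0) = \alpha_l$ exactly; an element $h^* \in I(\alpha + B^l)$ achieving $\nu_l = \alpha_l + B^l_l$ but not congruent modulo $I(\alpha + B^l + \delta_l)$ to $\psi_l$ times any element of $I(\alpha)$ then represents a nontrivial cokernel class, and the division argument of (1), now applied at $k=l$ with a controlled one-dimensional failure, shows this class spans the cokernel.

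The main obstacle is the divisibility-by-$\psi_l$ step underlying surjectivity in (1) and the cokernel computation in (2). Concretely, one must establish
\[
\dim_{\mathbb{C}} \bigl( I(\alpha + B^l) \bigm/ (\psi_l \cdot I(\alpha) + I(\alpha + B^l + \delta_k)) \bigr) = \begin{cases} 0, & k \neq l, \\ 1, & k = l. \end{cases}
\]
Both assertions hinge on the local structure of a general element at the centre of $\nu_l$ and on the proximity relations in the combined dual graph of the family $V$; the detailed analysis, in a framework that extends even beyond divisorial valuations, is carried out in \cite[Lem.~4]{d-g-n-2} and is invoked here rather than reproduced. $\Box$
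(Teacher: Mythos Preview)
The paper does not supply its own proof of this theorem: it is stated as a quotation of \cite[Lem.~4]{d-g-n-2} and closed with a $\Box$ immediately after the statement. Your proposal does exactly the same thing---ultimately invoking \cite[Lem.~4]{d-g-n-2} for the substantive divisibility-by-$\psi_l$ step---while adding a correct outline of the multiplication-by-$\psi_l$ map and its injectivity/cokernel analysis; so your treatment is aligned with, and slightly more informative than, the paper's.
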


Now we return to the situation and notations of this paper, and recall that a divisor with
exceptional support, $E$, is named {\it antinef} whenever $E\cdot
E_j\leq 0$ for all $j=1,2,\ldots,n$. We shall use the following
well-known result (see \cite[Lem. 1.2]{li4} for instance): If $E$
is a divisor on $X$ with exceptional support, then there is an
antinef divisor $E^{-}$ (called {\it antinef closure} of $E$) such
that $E^{-}\geq E$ and $\pi_* \mathcal{O}_X(-E)=\pi_*
\mathcal{O}_X(-E^{-})$ (in fact, $E^{-}$ is the least antinef
divisor $\geq E$). It can be computed by means of the following
procedure: If $E\cdot E_j\leq 0$ for all $j$ then $E^{-}=E$;
otherwise set $E':=E+E_j$, where $E_j$ is such that $E\cdot E_j>
0$, and repeat the procedure replacing $E$ by $E'$. Due to Lemma
\ref{descarga}, the antinef closure of $E$ will be obtained after
finitely many steps.\\

For each $\iota \in \mathcal{H}_i$, $1 \leq i \leq g^* +1$, we
denote by $d^i_\iota$ the following dimension that we shall use in the proof.
$$d^i_\iota := \dim_{\mathbb{C}}\left( \frac{\pi_* \mathcal{O}_X ( K_{X|X_0} - \lfloor \iota D
\rfloor+F_i )}{\mathcal{J} (\wp^\iota)}  \right).$$

By Theorem \ref{TRES} and Lemma \ref{ideafix} we have that
$P_{\wp} (t) = \sum_{i=1}^{g^* +1} P_i (t)$, where $P_i (t) =
\sum_{\iota \in \mathcal{H}_i} d^i_\iota t^\iota$. So, we only need to
compute $P_i (t)$ for any $i$. Write $z_i=t^{1/(e_{i-1}
\bar{\beta}_{i})}$.\\

Firstly assume that $1 \leq i \leq g^*$. To get $ P_i (t)$, we shall use the following

\begin{lem}\label{panoramix}
Let $i$ be an index as above. Then,

\begin{itemize}

\item[(1)] The map $(p,q,s)\mapsto \iota(i,p,q,s)$ gives a bijection
between the sets $B:=\{(p,q,s)\in \mathbb{Z}^3\mid p,q\geq 1,\;\; p
\bar{\beta}_{i} + q e_{i-1}  \leq n_i \bar{\beta}_{i} \mbox{ and } 0
\leq s \leq e_i -1\}$ and ${\mathcal H}_i\cap [0,1]$.

\item[(2)] If $\iota\in {\mathcal H}_i$ and $\iota>1$, then there
exist a unique $(p,q,s)\in B$ and a unique positive integer $r$
such that $\iota=\iota(i,p,q,s)+r=\iota(i,p,q,s+re_i)$.

\end{itemize}
\end{lem}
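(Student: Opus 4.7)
My plan is to establish both claims by elementary arithmetic, with the essential input being the coprimality $\gcd(\bar{b}_i, n_i) = 1$, where $\bar{b}_i := \bar{\beta}_i / e_i$. This follows from $e_i = \gcd(e_{i-1}, \bar{\beta}_i)$. For the well-definedness in part (1), I will observe that the inequality $p\bar{\beta}_i + q e_{i-1} \leq n_i \bar{\beta}_i$ defining $B$ coincides, after dividing by $e_{i-1}\bar{\beta}_i$, with $\frac{p}{e_{i-1}} + \frac{q}{\bar{\beta}_i} \leq \frac{1}{e_i}$, which is precisely the condition in the definition of $\mathcal{H}_i$; combined with $s \leq e_i - 1$, this gives $\iota(i,p,q,s) \leq 1$. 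Surjectivity is equally immediate: any $\iota \in \mathcal{H}_i \cap [0,1]$ admits by definition a representation $\iota(i,p,q,r_0)$ meeting all the conditions in $B$ except possibly $r_0 \leq e_i - 1$, but $\iota \leq 1$ together with $p/e_{i-1} + q/\bar{\beta}_i > 0$ forces $r_0/e_i < 1$, hence $r_0 \leq e_i - 1$.

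The essential step is injectivity. Starting from $\iota(i,p,q,s) = \iota(i,p',q',s')$ with both triples in $B$, clearing denominators by multiplying by $e_{i-1}\bar{\beta}_i$ and dividing by $e_i$ yields
$$(p + s n_i)\bar{b}_i + q n_i = (p' + s' n_i)\bar{b}_i + q' n_i.$$
Coprimality of $\bar{b}_i$ and $n_i$ forces $n_i \mid p - p'$. Here I need the a priori bound $p, p' \leq n_i - 1$, which comes from the defining inequality combined with $q \geq 1$ (yielding $p\bar{\beta}_i \leq n_i\bar{\beta}_i - e_{i-1}$, hence $p < n_i$). Thus $p = p'$, and the equation reduces to $\bar{b}_i(s - s') = q' - q$; the analogous bound $q, q' \leq \bar{b}_i - 1$ coming from $p \geq 1$ forces $q = q'$ and so $s = s'$.

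Part (2) then reduces to Euclidean division. Given $\iota \in \mathcal{H}_i$ with $\iota > 1$, a representation $\iota = \iota(i,p,q,r_0)$ supplied by the definition together with the decomposition $r_0 = s + r e_i$, $0 \leq s \leq e_i - 1$, yields $\iota = \iota(i,p,q,s) + r = \iota(i,p,q, s + re_i)$ with $(p,q,s) \in B$; since $\iota(i,p,q,s) \leq 1 < \iota$, the integer $r$ is positive. Uniqueness follows from part (1): any two such decompositions differ by an integer $r - r'$ equal to the difference of two elements of $(0,1]$, hence equal to zero, and part (1) identifies the triples.

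The only mildly delicate point in the whole argument is the injectivity step, where the size bounds on $p$ and $q$ must be combined properly with the coprimality relation; everything else is bookkeeping arising from the definitions.
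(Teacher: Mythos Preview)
Your proof is correct and follows essentially the same approach as the paper's own proof, which is a one-line appeal to the coprimality of (in your notation) $n_i$ and $\bar b_i=\bar\beta_i/e_i$ together with elementary arithmetic. Your version simply unpacks the details---the size bounds $p<n_i$ and $q<\bar b_i$ forced by the defining inequality, and the Euclidean division for part (2)---that the paper leaves implicit.
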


\begin{proof}
(1) follows from the fact that $e_{i-1}$ and $\bar{\beta}_{i}$
are relatively prime and (2)  from (1) and the
arithmetical expressions of the jumping numbers in ${\mathcal H}_i$.
\end{proof}

As a consequence of the above result we obtain the equality
\[
P_i(t) = \sum_{p,q \geq 1, r \geq 0, p \bar{\beta}_{i} + q e_{i-1}
\leq n_i \bar{\beta}_{i}} d^i_{\iota(i,p,q,r)} z_i^{p
\bar{\beta}_{i} + q e_{i-1} + r n_i \bar{\beta}_{i}}.
\]
Fix any triple of non negative integers $(p,q,s)$ in $B$ and write
\[
\sigma(i,p,q,s) := z_i^{ p \bar{\beta}_{i} + q e_{i-1} + s n_i
\bar{\beta}_{i} } \sum_{r \geq 1} d^i_{\iota(i,p,q,s+re_i)}
z_i^{re_i n_i \bar{\beta}_{i}}.
\]
Then one gets $$P_i(t) = \sum_{(p,q,s) \in B} \sigma(i,p,q,s).$$
So,  we are going to compute the expressions $\sigma(i,p,q,s)$. To do it we shall prove the
following

\begin{lem}\label{abraracurcix}
With the above notations and assumptions, it holds that
\[d^i_{\iota(i,p,q,s+re_i)} = d^i_{\iota(i,p,q,s)}\]
for all non-negative integer $r$.
\end{lem}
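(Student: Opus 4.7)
The first step is the arithmetic identity
\[
\iota(i,p,q,s+re_i)=\iota(i,p,q,s)+r,
\]
an immediate consequence of the definition of $\iota(i,\cdot,\cdot,\cdot)$ and the equality $re_i/e_i=r$. Since $r$ is a non-negative integer and the coefficients $a_j$ of $D$ are integers, this gives $\lfloor \iota(i,p,q,s+re_i) D \rfloor = \lfloor \iota(i,p,q,s) D \rfloor + rD$. Writing $\iota:=\iota(i,p,q,s)$, the lemma therefore reduces to showing that
\[
d^i_{\iota+r}=\dim_{\mathbb{C}}\frac{\pi_*\mathcal{O}_X(K_{X|X_0}-\lfloor \iota D \rfloor-rD+F_i)}{\pi_*\mathcal{O}_X(K_{X|X_0}-\lfloor \iota D \rfloor-rD)}
\]
is independent of the non-negative integer $r$.

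The plan is to invoke Theorem \ref{anafel}(1) iteratively. I would take $V=\{\nu_1,\ldots,\nu_n\}$ to be the family of divisorial valuations attached to all the exceptional divisors given by $\pi$, set $k:=st_i$ (so $E_k=F_i$) and $l:=n$, and choose $\psi_l$ to be a general element $\varphi$ of $\nu=\nu_n$. By the standard reciprocity $\nu_j(\varphi)=\nu(\varphi_j)=a_j$ one obtains $B^l=(a_1,\ldots,a_n)$, so the shift $\alpha\mapsto\alpha+B^l$ corresponds precisely to subtracting $D$ from the divisor $-\sum\alpha_j E_j$. Taking $\alpha$ to be the coefficient vector of $\lfloor \iota D \rfloor - K_{X|X_0} - F_i$, one identifies $d^i_\iota=d_{st_i}(\alpha)$ and $d^i_{\iota+r}=d_{st_i}(\alpha+rB^l)$. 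Applying Theorem \ref{anafel}(1) $r$ times in succession then yields $d^i_\iota=d^i_{\iota+r}$; the iteration is legitimate because $S_V$ is closed under addition (as each $\nu_j$ is a valuation), so each intermediate vector $\alpha+tB^l$ with $1\leq t\leq r$ lies in $S_V$ as soon as $\alpha$ does.

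The main technical difficulty is verifying the hypothesis $\alpha\in S_V$: the raw coefficient vector of $\lfloor \iota D \rfloor - K_{X|X_0} - F_i$ need not itself be the valuation vector of any element of $R$. This I would handle by replacing each of the two divisors $\sum\alpha_j E_j$ and $\sum\alpha_j E_j + E_{st_i}$ by its antinef closure, which does not alter the associated sheaves $\pi_*\mathcal{O}_X(-\cdot)$ by repeated application of Lemma \ref{descarga}, and by invoking the classical fact that every antinef integral divisor with non-negative coefficients is of the form $\sum\nu_j(h)E_j$ for some $h\in R$. The delicate point is to carry out this antinef-closure reduction so that the two resulting divisors still differ by a single $E_{st_i}$, so that the quotient structure is preserved and Theorem \ref{anafel}(1) applies verbatim. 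Once this is in place, the $r$-fold iteration described above concludes the proof.
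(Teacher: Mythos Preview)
Your approach is essentially the paper's: both reduce to Theorem~\ref{anafel}(1) via the arithmetic identity $\iota(i,p,q,s+re_i)=\iota(i,p,q,s)+r$ and pass through antinef closures to land in $S_V$, arguing by induction on $r$ with $l=n$ and $k=st_i$. The one point you flag but leave open is exactly where the paper does a little more work: rather than taking antinef closures of both divisors separately, it takes only the antinef closure $\sum\beta_jE_j$ of $\sum\alpha_jE_j-F_i$ and then uses that $F_i$ contributes $\iota$ (Theorem~\ref{DOS}) to force $\beta_{st_i}=\alpha_{st_i}-1$, whence $\pi_*\mathcal{O}_X(D(\alpha))=\pi_*\mathcal{O}_X(D(\beta)-F_i)$ and the quotient becomes exactly $d_{st_i}(\beta)$ with $\beta\in S_V$; the paper also checks, using $D\cdot E_j=0$ for $j<n$, that the antinef closures at levels $r$ and $r+1$ are compatible (they differ only possibly in the $n$-th coordinate), which is what makes the induction go through cleanly.
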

\begin{proof}

We shall reason by induction on $r$. Since the equality is evident
for $r=0$, we assume that
$d^i_{\iota(i,p,q,s+re_i)}=d^i_{\iota(i,p,q,s+(r-1)e_i)}$. Set $
\mathcal{J} \left(\wp^{\iota(i,p,q,s+re_i)} \right) = \pi_*
\mathcal{O}_X \left(- \sum_{j=1}^n \alpha_j E_j\right)$, where
\[
(\alpha_1,  \ldots, \alpha_n) = (\lfloor \iota(i,p,q,s+re_i) \nu
(\varphi_1) \rfloor - \kappa_1, \ldots, \lfloor \iota(i,p,q,s+re_i)
\nu (\varphi_n)\rfloor - \kappa_n),
\]
$\sum_{j=1}^n \kappa_j E_j$ being $K_{X|X_0}$. Thus
\[
\mathcal{J} \left(\wp^{\iota(i,p,q,s+(r+1)e_i)} \right) = \pi_*
\mathcal{O}_X \left(- \sum_{j=1}^n (\alpha_j + \nu(\varphi_j)) E_j
\right).
\]

Let $\sum_{j=1}^n \beta_j E_j$ {\LARGE (}$\sum_{j=1}^n (\beta'_j +
\nu(\varphi_j)) E_j $, respectively{\LARGE )} be the antinef closure
of the divisor $\sum_{j=1}^n \alpha_j E_j - F_i$ {\LARGE
(}$\sum_{j=1}^n \left(\alpha_j + \nu(\varphi_j)\right) E_j - F_i$,
respectively{\LARGE )}. As $D \cdot E_j =0$ ($1 \leq j \leq n-1$)
and $D\cdot E_n=-1$, $D$ being the associated to $\wp$ divisor $D =
\sum_{j=1}^n \nu(\varphi_j) E_j$, it is easy to deduce from the
above described procedure for computing antinef closures that
$\beta'_j=\beta_j$ whenever $j<n$ and $\beta'_n\leq \beta_n$.
Moreover one has that $\beta_{st_i}=\beta'_{st_i}=\alpha_{st_i}-1$
(since $F_i$ contributes $\iota$).

Now, consider the commutative diagram

\[
\begin{CD}
\frac{\pi_* \mathcal{O}_X \left(D(\alpha) + F_i\right)}{\pi_*
\mathcal{O}_X \left(D(\alpha)\right)} @= \frac{\pi_* \mathcal{O}_X
\left(D(\alpha) + F_i\right)}{\pi_* \mathcal{O}_X
\left(D(\alpha)\right)} @>f>> \frac{\pi_* \mathcal{O}_X
\left(D(\beta) \right)}{\pi_*
\mathcal{O}_X \left(D(\beta) - F_i \right)}\\
@VVgV @VVV @VVmV \\
\frac{\pi_* \mathcal{O}_X \left(D(\alpha + \varphi)+
F_i\right)}{\pi_* \mathcal{O}_X \left(D(\alpha + \varphi)\right)}
@>h>> \frac{\pi_* \mathcal{O}_X \left(D(\beta' + \varphi)
\right)}{\pi_* \mathcal{O}_X \left(D(\beta' + \varphi)- F_i\right)}
@>i>> \frac{\pi_* \mathcal{O}_X \left(D(\beta + \varphi)
\right)}{\pi_* \mathcal{O}_X \left(D(\beta + \varphi) - F_i\right)}
\end{CD}
\]
\\[2mm]
\noindent where $D(\alpha) := - \sum_{j=1}^n \alpha_j E_j $,
$D(\beta) := - \sum_{j=1}^n \beta_j E_j $, $D(\alpha + \varphi):=
- \sum_{j=1}^n (\alpha_j + \nu(\varphi_j)) E_j $, $D(\beta' +
\varphi):= - \sum_{j=1}^n (\beta'_j + \nu(\varphi_j)) E_j $,
$D(\beta + \varphi):= - \sum_{j=1}^n (\beta_j + \nu(\varphi_j))
E_j $, $f$ and $h$ are the identity homomorphisms (notice that
$\pi_* \mathcal{O}_X \left(D(\alpha)\right)=\pi_*\mathcal{O}_X
\left(D(\beta) - F_i \right)$ and
$\pi_*\left(D(\alpha+\varphi)\right)=\pi_*\mathcal{O}_X
\left(D(\beta'+\varphi) - F_i \right)$), $i$ is defined by the
product by $\varphi_n^{\beta_n-\beta'_n}$ and $g$ and $m$ are
given by  the product by $\varphi_n$ (notice that $\nu(\varphi_j)=
\nu_{E_j} (\varphi_n)$, $\nu_{E_j}$ being the divisorial valuation
 provided by the exceptional divisor $E_j$). This proves
$d_{\iota(i,p,q,s+(r+1)e_i)}=d_{\iota(i,p,q,s+re_i)}$ and thus our
lemma, since $g$ is an isomorphism because $m$ is also an
isomorphism by the proof in \cite[Lem. 4]{d-g-n-2} of Statement (1) in
Theorem $\ref{anafel}$.
\end{proof}

Notice that $d^i_{\iota(i,p,q,s)} =1$ taking into account that
$\iota(i,p,q,s)<1$ ($\iota(i,p,q,s)\not=1$ by \cite[Prop. 8.9]{ja})
and the remark after Lemma \ref{asterix}. Therefore
\[
\sigma(i,p,q,s) = z_i^{p \bar{\beta}_{i} + q e_{i-1} + s n_i
\bar{\beta}_{i}}\sum_{r \geq 1}  z_i^{re_i n_i \bar{\beta}_{i}}=
\frac{z_i^{p \bar{\beta}_{i} + q e_{i-1} + (s+e_i) n_i
\bar{\beta}_{i}}}{1-z_i^{e_i n_i \bar{\beta}_{i}}}
\]
and this implies that
\[
P_i(t) = \frac{1}{1-t} \sum_{\iota \in \mathcal{H}_i, \iota < 1}
t^\iota.
\]

Now we shall obtain an expression for the series $P_{g^*+1}(t)$.
We shall use a similar result to Lemma \ref{panoramix} whose proof
follows from the fact that $e_{g^*}$ and $\bar{\beta}_{g^*+1}$ are
relatively prime.

\begin{lem}\label{wasp}

The following statements hold:

\begin{itemize}

\item[(1)] The map $(s,q)\mapsto \iota(g^*+1,s,q)$ gives a bijection
between the sets $T:=\{(s,q)\in \mathbb{Z}^2\mid 1\leq s\leq e_{g^*}
\mbox{ and } 1\leq q \leq \bar{\beta}_{g^*+1}\}$ and
$\Omega:=\{\iota\in \mathcal{H}_{g^* +1}\mid  \iota\leq 2 \mbox{ and
} \iota-1\not\in \mathcal{H}_{g^* +1} \}$.

\item[(2)] If $\iota\in {\mathcal H}_{g^*+1}$ and $\iota>2$ then
there exist a unique $(s,q)\in T$ and a unique positive integer $r$
such that $\iota=\iota(g^*+1,s,q)+r=\iota(g^*+1,s+re_{g^*},q)$. $\Box$

\end{itemize}

\end{lem}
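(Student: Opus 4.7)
The plan is to exploit the coprimality $\gcd(e_{g^*},\bar{\beta}_{g^*+1})=1$ together with the rewriting
\[
\iota(g^*+1,p,q) = \frac{p\bar{\beta}_{g^*+1}+qe_{g^*}}{e_{g^*}\bar{\beta}_{g^*+1}},
\]
which shows that the set of integer pairs $(p,q)$ yielding a prescribed value $\iota\in\mathcal{H}_{g^*+1}$ is a single orbit under the translation $(p,q)\mapsto (p+e_{g^*},\,q-\bar{\beta}_{g^*+1})$. Part (1) then becomes the statement that the box $T$ is a fundamental domain for this action restricted to pairs with both coordinates $\geq 1$ and that the values coming from $T$ are exactly those which are $\leq 2$ and fail to remain in $\mathcal{H}_{g^*+1}$ after subtracting $1$.

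For (1), I would first establish injectivity: if $\iota(g^*+1,s_1,q_1)=\iota(g^*+1,s_2,q_2)$ then $(s_1-s_2)\bar{\beta}_{g^*+1}=(q_2-q_1)e_{g^*}$, and coprimality combined with $|s_1-s_2|\leq e_{g^*}-1$ forces $(s_1,q_1)=(s_2,q_2)$. Next I would verify that the image lies in $\Omega$: the bound $\iota\leq 2$ is immediate from $1\leq s\leq e_{g^*}$ and $1\leq q\leq \bar{\beta}_{g^*+1}$, while $\iota-1\in\mathcal{H}_{g^*+1}$ would produce, after applying the translation action, a representation of $\iota-1$ by a pair with both coordinates $\geq 1$, contradicting the defining inequalities of $T$. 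Surjectivity would be handled by starting from any representation $(p,q)$ of a given $\iota\in\Omega$ and translating to normalize $p$ into $\{1,\ldots,e_{g^*}\}$; positivity of the resulting $q'$ comes from $\iota>0$, and $q'\leq \bar{\beta}_{g^*+1}$ is precisely the hypothesis $\iota-1\notin\mathcal{H}_{g^*+1}$.

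For (2), the same normalization applied to $\iota>2$ produces a unique $(s,q')$ with $1\leq s\leq e_{g^*}$ and, since $s/e_{g^*}\leq 1$, necessarily $q'/\bar{\beta}_{g^*+1}>1$, i.e.\ $q'>\bar{\beta}_{g^*+1}$. Euclidean division of $q'-1$ by $\bar{\beta}_{g^*+1}$ then gives unique integers $r\geq 1$ and $q\in\{1,\ldots,\bar{\beta}_{g^*+1}\}$ with $q'=q+r\bar{\beta}_{g^*+1}$, and the two asserted identities
\[
\iota=\iota(g^*+1,s,q)+r=\iota(g^*+1,s+re_{g^*},q)
\]
follow by direct substitution. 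Uniqueness of the triple $(s,q,r)$ is inherited from part (1) applied to $\iota-r\in\Omega$.

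The only delicate point is the matching between the two normalizations on the one hand and the conditions cutting out $\Omega$ on the other; once one sees that ``$q'\leq \bar{\beta}_{g^*+1}$'' is literally the translation of ``$\iota-1\notin\mathcal{H}_{g^*+1}$'' via the orbit description, both parts reduce to the kind of bookkeeping already carried out for Lemma~\ref{panoramix}.
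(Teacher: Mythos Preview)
Your proposal is correct and follows exactly the approach the paper indicates: the paper's own proof is the single sentence ``follows from the fact that $e_{g^*}$ and $\bar{\beta}_{g^*+1}$ are relatively prime,'' and you have simply spelled out the standard orbit/fundamental-domain argument that this coprimality entails. One minor remark: the positivity of the normalized $q'$ in your surjectivity step does not actually need $\iota>0$ --- it follows directly from $q'\geq q\geq 1$, since reducing $p$ into $\{1,\dots,e_{g^*}\}$ only increases the second coordinate.
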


Set $T$ as in Lemma \ref{wasp}. As a consecuence of that lemma, one can see that
$$P_{g^*+1}(t)=\sum_{(s,q)\in T} \tau(s,q),$$ where
$$\tau(s,q):=\sum_{r\geq 0} d^{g^*+1}_{\iota(g^*+1,s+ r e_{g^*},q)} z_{g^*+1}^{(s+ r e_{g^*})
\bar{\beta}_{g^*+1} + qe_{g^*} }.$$

Applying (2) in Theorem \ref{anafel}, for any fixed pair
$(s,q) \in T$, it happens that
\[
d^{g^*+1}_{\iota(g^*+1,s+re_{g^*},q)} = d^{g^*+1}_{\iota(g^*+1,s,q)}
+ r \mbox{\;\,\; for all $r\geq 0$}.
\]
From this fact, one can deduce the equality
\[
\tau(s,q) = z_{g^*+1}^{ s \bar{\beta}_{g^*+1} +qe_{g^*}} \left[
\frac{d^{g^*+1}_{\iota(g^*+1,s ,q)}}{1 - z_{g^*+1}^{ e_{g^*}
\bar{\beta}_{g^*+1} }} \; + \; \frac{z_{g^*+1}^{ e_{g^*}
\bar{\beta}_{g^*+1} }}{(1 - z_{g^*+1}^{ e_{g^*} \bar{\beta}_{g^*+1}
})^2} \right],
\]
therefore
\[
P_{g^*+1}(t) = \sum_{\iota \in \Omega} t^\iota \left(
\frac{d^{g^*+1}_\iota}{1-t} + \frac{t}{(1-t)^2} \right),
\]
$\Omega$ being as in Lemma \ref{wasp}.

Since $d^{g^*+1}_{\iota}=1$ whenever $\iota<1$ (by the remark after
Lemma \ref{asterix}), it only remains to prove that
$d^{g^*+1}_{\iota}=1$ for all $\iota \in \mathcal{H}_{g^* +1}$ such
that $1<\iota \leq 2$ and $\iota-1$ is not a jumping number (recall
that $1$ is not a jumping number \cite[Prop. 8.9]{ja}). Thus,
consider a jumping number $\iota$ satisfying these conditions and,
reasoning by contradiction, assume that $d^{g^*+1}_{\iota}\geq 2$.
Let $\alpha_j$ be the coefficient of $E_j$ in the divisor $\lfloor
\iota D \rfloor -K_{X|X_0}$, $1\leq j\leq n$. The natural
monomorphism of vector spaces
$$\frac{\pi_*{\mathcal O}_X(-\lfloor \iota D \rfloor +K_{X|X_0}+E_n)}{{\mathcal J}(\wp^{\iota})}\rightarrow \frac{\pi_{*}{\mathcal O}_X(-(\alpha_n-1)E_n)  }{\pi_{*}{\mathcal O}_X(-\alpha_n E_n)}$$
and \cite[Th. 1]{ga} show that the vector space on the left is
generated by classes of elements of the type $\prod_{k=0}^{g+1}
\varphi_{l_k}^{b_k}$, where $b_k$, $0\leq k\leq g+1$, are
nonnegative integers and $l_0,\ldots, l_{g+1}$ are as in the
paragraph after Definition \ref{general}. Since two elements of this
type satisfying the condition $b_{g+1}=0$ are linearly dependent
(see the proof of \cite[Th. 1]{ga}) there exists $f\in R$ such that
the class of $f \varphi_n$ is a non-zero element of the vector space
${\pi_*{\mathcal O}_X(-\lfloor \iota D \rfloor
+K_{X|X_0}+E_n)}/{{\mathcal J}(\wp^{\iota})}$ and therefore
$\nu_{E_n}(f\varphi_n)=\nu(f\varphi_n)=\alpha_n-1$ and
$\nu_{E_j}(f\varphi_n)\geq \alpha_j$, $1\leq j<n$. Thus
$\nu_{E_n}(f)=\alpha_n-1-\nu_{E_n}(\varphi_n)$ and $\nu_{E_j}(f)\geq
\alpha_j-\nu_{E_j}(\varphi_n)$, $1\leq j<n$. This means that $f$ is
in $ \pi_*{\mathcal O}_X(-\lfloor (\iota - 1) D \rfloor
+K_{X|X_0}+E_n)$ but it is not in  $ \pi_*{\mathcal O}_X(-\lfloor
(\iota - 1) D \rfloor +K_{X|X_0})$ what implies that $\iota-1$ is a
jumping number, which contradicts our assumptions. This concludes
the proof of Theorem \ref{UNO}. $\Box$\\

\pushQED{\qed}

\end{document}